\documentclass[11pt]{article}

\usepackage[a4paper,margin=30mm]{geometry}
\usepackage[T1]{fontenc}
\usepackage{lmodern}
\usepackage{microtype}
\usepackage{amsmath,amssymb,amsthm,mathtools}
\usepackage{booktabs,tabularx,array}
\usepackage{xcolor}
\usepackage{enumitem}
\usepackage{hyperref}
\usepackage[nameinlink,capitalize,noabbrev]{cleveref}

\definecolor{linkblue}{RGB}{35,75,125}
\hypersetup{
  colorlinks=true,
  linkcolor=linkblue,
  citecolor=linkblue,
  urlcolor=linkblue,
  pdftitle={Some Integrability Properties of m-Subharmonic Functions}
}

\setlist{itemsep=0.2em,topsep=0.4em}

\newtheorem{theorem}{Theorem}[section]
\newtheorem{proposition}[theorem]{Proposition}
\newtheorem{lemma}[theorem]{Lemma}
\newtheorem{corollary}[theorem]{Corollary}
\theoremstyle{definition}
\newtheorem{definition}[theorem]{Definition}

\theoremstyle{remark}
\newtheorem{remark}[theorem]{Remark}

\newcommand{\C}{\mathbb C}
\newcommand{\SH}{\mathrm{SH}}
\newcommand{\mCap}{\operatorname{Cap}}
\newcommand{\Vol}{\operatorname{Vol}}
\newcommand{\loc}{\mathrm{loc}}
\newcommand{\ddc}{dd^c}
\newcommand{\pc}{p_{\mathrm c}}
\newcommand{\Cm}{(\mathrm C_m)}
\newcommand{\Cmd}{(\mathrm C_{m,\delta})}

\title{\bfseries Some Integrability Properties of \(m\)-Subharmonic Functions}
\author{Genglong Lin\\[0.35em]
\small Beijing Institute of Mathematical Sciences and Applications,
Beijing 101408, China\\
\small\href{mailto:lingenglong@bimsa.cn}{\texttt{lingenglong@bimsa.cn}}}
\date{July 2026}

\begin{document}
\maketitle
\vspace{-2.2em}

\begin{abstract}
Let \(1\le m<n\) and let \(u\) be an \(m\)-subharmonic function on a
domain in \(\C^n\).  We study local exponential and polynomial
integrability, with particular attention to the sharp polynomial exponent
predicted by B{\l}ocki's conjecture.  Explicit radial examples show that
direct analogues of the Guan--Zhou strong openness theorem and Skoda's
integrability criterion formulated in terms of the \(m\)-Lelong number fail
when \(m<n\).  We classify a family of radial power-logarithmic
singularities and determine the exact \(L^p\)-integrability range for each
member, including endpoint behavior.

We resolve two problems posed by Benali--Ghiloufi.  The normalized limit of
the ball maximum always equals the \(m\)-Lelong number; this follows by
combining their spherical-mean formula with the strong uniqueness theorem
for tangents.  The pointwise integrability exponent is lower semicontinuous
in the base point.  However, even when restricted to \(\SH_m\), it is not
lower semicontinuous with respect to the \(L^1_{\loc}\) topology.  We also
disprove their polynomial openness conjecture using an explicit
power-logarithmic endpoint example.

Finally, we introduce a scale of local Hessian-capacity conditions, denoted
by \(\Cmd\).  The volume-capacity inequality and the layer-cake formula yield
\[
 u\in L^s_{\loc}\quad\text{for every}\quad
 s<\frac{(m+\delta)n}{n-m}.
\]
The critical condition \(\mathrm C_{m,0}=\mathrm C_m\) holds for negative
functions of finite total Hessian mass with relatively compact deep sublevel
sets, and for radial functions.  More generally, functions in the energy
class \(\mathcal E_{p,m}\) satisfy \(\mathrm C_{m,p}\),
recovering the full {\AA}hag--Czy{\.z} Sobolev exponent.  These results
provide partial progress toward B{\l}ocki's conjecture, which has remained
open for more than two decades.
\end{abstract}

\noindent\textbf{Keywords.}
\(m\)-subharmonic function; complex Hessian equation; integrability
exponent; Hessian capacity; Lelong number; strong openness.

\medskip
\noindent\textbf{MSC 2020.} 32U05, 32U25, 32W20, 35J60.

\section{Introduction}

The cone of \(m\)-subharmonic functions interpolates between ordinary
subharmonic functions and plurisubharmonic functions:
\[
  \operatorname{PSH}=\SH_n\subset\SH_{n-1}\subset\cdots
  \subset\SH_1=\operatorname{SH}.
\]
The two endpoints have markedly different singularity theories.  A
plurisubharmonic germ has nontrivial local exponential integrability, while
an ordinary subharmonic function in real dimension \(2n\) may have a
Newtonian pole.  For \(1\le m<n\), the fundamental complex Hessian pole is
\[
  G_m(z)=-|z|^{-2(n-m)/m}.
\]
It belongs locally to \(L^p\) exactly when
\[
  p<\pc:=\frac{nm}{n-m}.
\]
This model led B{\l}ocki to conjecture that every local
\(m\)-subharmonic function belongs to \(L^p_{\loc}\) for every
\(p<\pc\) \cite{Blocki2005}.  The endpoint is necessarily excluded:
\(G_m\notin L^{\pc}_{\loc}\).

There are three different integrability questions that should not be
conflated.  First, one can ask whether the local integrability of \(e^{-u}\)
implies that of \(e^{-pu}\) for some \(p>1\).  Second, one can ask
whether the \(m\)-Lelong number controls exponential integrability, as in
Skoda's theorem for plurisubharmonic functions.  Third, one can study
positive powers \(|u|^p\).  We give explicit negative answers to the first
two questions for every \(m<n\), whereas the third includes B{\l}ocki's
sharp conjecture, which remains largely open.

\paragraph{Contrast with the plurisubharmonic case.}
When \(m=n\), the model singularity is logarithmic.  Every
plurisubharmonic germ that is not identically \(-\infty\) has some local
exponential integrability and therefore belongs locally to \(L^p\) for
every finite \(p\).  Skoda's theorem relates the exponential threshold to
the ordinary Lelong number, while the Guan--Zhou theorem shows that the
range of integrable exponential weights cannot have a finite closed
endpoint.  Thus polynomial integrability is automatic in the
plurisubharmonic setting.

For \(m<n\), by contrast, \(m\)-subharmonic functions may have algebraic
poles of Hessian type.  Their polynomial integrability can stop at a
finite critical exponent, and logarithmic corrections decide whether the
endpoint is included.  Moreover, a vanishing \(m\)-Lelong number does not
ensure any exponential integrability, and the direct analogue of strong
openness fails.  Consequently, the \(m<n\) problem cannot be treated by
formally substituting \(m\)-subharmonic functions and their Lelong numbers
into the classical theorems.  Instead, one needs estimates for polynomial
sublevel tails; in this paper, Hessian capacity is the controlling quantity.

The main positive tool in this paper is the capacity-decay scale
\[
 \mCap_m(K\cap\{u<A-t\},D)
 \le C t^{-(m+\delta)}.
 \tag{\(\mathrm C_{m,\delta}\)}\label{eq:capacity-condition-intro}
\]
Combined with the Dinew--Ko{\l}odziej volume-capacity inequality
\cite{DinewKolodziej2014}, this condition yields
\(u\in L^s_{\loc}\) for \(s<(m+\delta)n/(n-m)\).  The critical condition
\(\mathrm C_{m,0}=\mathrm C_m\) gives B{\l}ocki's conjectured strict range
and covers negative functions of finite total Hessian mass with relatively
compact deep sublevel sets, the local Cegrell class, and radial germs.  The
stronger condition \(\mathrm C_{m,p}\) holds for
\(\mathcal E_{p,m}\) and recovers the improved {\AA}hag--Czy{\.z} exponent.
The assertion for radial germs follows from the mean-value formula of
Benali--Ghiloufi \cite{BenaliGhiloufi2018}.

A second set of results addresses two questions left open by
Benali--Ghiloufi.  They proved that the normalized limit of the ball maximum
\(M(u,a,r)\) exists and is bounded above by \(\nu_u(a)\).  We prove the
reverse inequality using the strong uniqueness theorem for tangents when
\(m<n\), thereby answering their Problem~2.  We also resolve all parts of
their Problem~3: lower semicontinuity with respect to the base point holds,
whereas lower semicontinuity with respect to the function fails in the
\(L^1_{\loc}\) topology, and the polynomial integrability interval need not
be open.

The paper is organized as follows.  \Cref{sec:prelim} establishes the radial
Hessian test.  \Cref{sec:models} classifies the \(m\)-subharmonic
power-logarithmic models and determines their exact \(L^p\)-ranges.
\Cref{sec:exponential} gives counterexamples to direct analogues of the
strong openness theorem and Skoda's integrability criterion.
\Cref{sec:polynomial} treats
polynomial exponents and a failure of endpoint openness.
\Cref{sec:blocki} distinguishes the false endpoint statement from
B{\l}ocki's original conjecture.  \Cref{sec:capacity} proves the capacity criterion
and incorporates the {\AA}hag--Czy{\.z} energy classes.
\Cref{sec:bg-problems} answers Problems~2 and~3 of Benali--Ghiloufi, and
\Cref{sec:radial-bg} incorporates their radial mean-value mechanism into
\(\Cm\).  \Cref{sec:future} discusses the main open directions suggested by
these results.

\section{Preliminaries and the radial Hessian test}
\label{sec:prelim}

Let \(\Omega\subset\C^n\) be a domain and put
\(\beta=\ddc |z|^2\).  A locally integrable, upper semicontinuous function
\(u\) is \(m\)-subharmonic if it is subharmonic and
\[
  \ddc u\wedge\alpha_1\wedge\cdots\wedge\alpha_{m-1}
  \wedge\beta^{n-m}\ge0
\]
for all constant-coefficient \(m\)-positive \((1,1)\)-forms
\(\alpha_j\).  For \(C^2\) functions, this is equivalent to requiring the
eigenvalue vector of the complex Hessian to lie in the G{\aa}rding cone
\(\Gamma_m\); equivalently, its first \(m\) elementary symmetric functions
must be nonnegative \cite[Proposition 3.1]{Blocki2005}.

\begin{lemma}
\label{lem:radial-test}
Let \(u(z)=f(t)\), where \(t=|z|^2\) and \(f\in C^2(0,R^2)\).  Put
\[
  A=f'(t),\qquad B=f'(t)+tf''(t).
\]
The complex Hessian has tangential eigenvalue \(A\), with multiplicity
\(n-1\), and radial eigenvalue \(B\).  Thus
\[
 \sigma_k=\binom{n-1}{k}A^k+
 \binom{n-1}{k-1}A^{k-1}B,\qquad 1\le k\le n.
\]
If \(A>0\), then \(u\) is \(m\)-subharmonic on the punctured ball if and
only if
\[
  \frac BA\ge-\frac{n-m}{m}.
\]
\end{lemma}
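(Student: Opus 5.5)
The plan has three parts: compute the complex Hessian of \(u=f(|z|^2)\) directly, read off \(\sigma_k\) from its eigenvalues, and then reduce the G{\aa}rding-cone conditions to a single inequality on \(B/A\).

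\textbf{Hessian and eigenvalues.} With \(t=|z|^2\), differentiating gives
\[
\partial_j\bar\partial_k u=f'(t)\,\delta_{jk}+f''(t)\,\bar z_j z_k .
\]
So the Hessian matrix is \(A\,I+f''(t)\,\bar z z^{T}\), a rank-one perturbation of a multiple of the identity. On the complex hyperplane orthogonal to \(\bar z\) it acts as \(A\), which gives multiplicity \(n-1\). In the direction \(\bar z\) the eigenvalue is \(A+tf''(t)=B\).

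\textbf{Elementary symmetric functions.} The formula for \(\sigma_k\) follows immediately by splitting each \(k\)-subset of eigenvalues according to whether it contains the radial one. Since \(f\in C^2\), \(u\) is \(C^2\) on the punctured ball. By the characterization recalled above \cite[Proposition 3.1]{Blocki2005}, \(u\) is \(m\)-subharmonic there if and only if \(\sigma_k\ge0\) for \(1\le k\le m\) at every point.

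\textbf{Reduction to one inequality.} Assume \(A>0\) and divide \(\sigma_k\) by \(A^{k-1}\binom{n-1}{k-1}\). Using
\[
\binom{n-1}{k}\Big/\binom{n-1}{k-1}=\frac{n-k}{k},
\]
the condition \(\sigma_k\ge0\) becomes
\[
\frac BA\ge-\frac{n-k}{k}.
\]
(For \(k=1\) this reads \(B/A\ge-(n-1)\), which is consistent.) The thresholds \(-(n-k)/k=1-n/k\) increase with \(k\), so the condition for \(k=m\) is the strongest and implies all those with \(k<m\). Hence the whole family \(\sigma_1,\dots,\sigma_m\ge0\) is equivalent to \(B/A\ge-(n-m)/m\) pointwise, which proves the claimed equivalence.

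\textbf{Main obstacle.} The only genuinely delicate point is the appeal to the \(C^2\) characterization of \(\SH_m\) through the cone \(\Gamma_m\), in particular that nonnegativity of \(\sigma_1,\dots,\sigma_m\) (closed cone) suffices. That is exactly what the cited Proposition supplies. The rest is routine: one checks the eigenvector claim by applying the matrix to \(\bar z\) and to vectors orthogonal to it.
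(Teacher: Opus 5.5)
Your proof is correct and follows essentially the same route as the paper: the same Hessian $f'(t)\delta_{jk}+f''(t)\bar z_j z_k$, the same expansion of $\sigma_k$, and the same reduction via $\binom{n-1}{k}/\binom{n-1}{k-1}=(n-k)/k$, with $k=m$ giving the strongest condition. The only difference is cosmetic: you find the eigenvalues by checking eigenvectors of the rank-one perturbation $A\,I+f''(t)\,\bar z z^{T}$, while the paper applies a unitary change of coordinates to $z=(\sqrt t,0,\ldots,0)$, which makes the Hessian diagonal.
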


\begin{proof}
Direct differentiation gives
\[
 u_{j\bar k}=f'(t)\delta_{jk}+f''(t)\bar z_j z_k.
\]
At a fixed nonzero point, a unitary change of coordinates allows us to take
\(z=(\sqrt t,0,\ldots,0)\).  The Hessian matrix is then diagonal with entries
\(B,A,\ldots,A\), which gives the formula for \(\sigma_k\).  If \(A>0\),
then, for each \(k\), the inequality \(\sigma_k\ge0\) is equivalent to
\[
 \frac BA\ge-\frac{\binom{n-1}{k}}{\binom{n-1}{k-1}}
 =-\frac{n-k}{k}.
\]
Among \(1\le k\le m\), the strongest condition corresponds to \(k=m\).
\end{proof}

\begin{lemma}
\label{lem:truncation}
Suppose that
\[
 u\in\SH_m(B(0,R)\setminus\{0\}),\qquad
 \lim_{z\to0}u(z)=-\infty,
\]
and that \(u\not\equiv-\infty\).  Defining \(u(0)=-\infty\), the function
\(u\) belongs to \(\SH_m(B(0,R))\).
\end{lemma}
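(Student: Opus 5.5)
The argument is a truncation: I approximate \(u\) near the origin by bounded \(m\)-subharmonic functions obtained by gluing with large constants, and then pass to the limit.

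For each \(k\in\mathbb N\) I set \(u_k=\max(u,-k)\) on \(B(0,R)\setminus\{0\}\) and \(u_k(0)=-k\). Since \(u(z)\to-\infty\) as \(z\to0\), there is \(r_k>0\) with \(u<-k\) on \(B(0,r_k)\setminus\{0\}\). Hence \(u_k\equiv-k\) on the whole ball \(B(0,r_k)\), including the origin. The family \(\{B(0,r_k),\,B(0,R)\setminus\{0\}\}\) covers \(B(0,R)\). On \(B(0,R)\setminus\{0\}\), \(u_k\) is the maximum of two \(m\)-subharmonic functions, so it is \(m\)-subharmonic there. On \(B(0,r_k)\) it is a constant, hence \(m\)-subharmonic. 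The two descriptions agree on the overlap. Since \(m\)-subharmonicity is a local property, \(u_k\in\SH_m(B(0,R))\).

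The only point needing care is locality. Upper semicontinuity, local integrability, and the distributional positivity \(\ddc u\wedge\alpha_1\wedge\cdots\wedge\alpha_{m-1}\wedge\beta^{n-m}\ge0\) can all be checked on an open cover with a partition of unity. Subharmonicity is local as well.

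The sequence \(u_k\) decreases pointwise to \(u\) on \(B(0,R)\), with \(u(0)=-\infty\), because \(u_k(0)=-k\to-\infty\). A decreasing limit of \(m\)-subharmonic functions is either \(m\)-subharmonic or identically \(-\infty\) on the connected set \(B(0,R)\). Since \(u\not\equiv-\infty\), the limit \(u\) is \(m\)-subharmonic.

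I would justify this limit statement briefly. Upper semicontinuity passes to decreasing limits. Subharmonicity passes to decreasing limits that are not identically \(-\infty\), which in particular gives \(u\in L^1_{\loc}\). The positivity inequality passes by monotone convergence, since \(u_k\to u\) in \(L^1_{\loc}\) and the inequality is tested against nonnegative test forms with constant \(\alpha_j\). Alternatively, I could cite the standard stability of \(\SH_m\) under decreasing limits \cite{Blocki2005}.

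I do not expect a real obstacle. The one substantive input is the hypothesis \(\lim_{z\to0}u=-\infty\): it makes \(u_k\) locally constant near \(0\), which is what avoids any removable-singularity theorem. Without that hypothesis one would need a capacity or polarity argument.
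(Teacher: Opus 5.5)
Your proposal is correct and follows the paper's proof essentially step for step. You truncate by \(\max\{u,-k\}\), use \(\lim_{z\to0}u=-\infty\) to make the truncation constant near the origin so that it extends across \(0\), and pass to the decreasing limit using the stability of \(\SH_m\) under decreasing sequences whose limit is not identically \(-\infty\), justified through \(L^1_{\loc}\) convergence in the defining positivity inequalities.
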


\begin{proof}
For \(j\ge1\), set
\[
 u_j=\max\{u,-j\}
 \quad\text{on }B(0,R)\setminus\{0\}.
\]
The class \(\SH_m\) is closed under finite maxima
\cite[Proposition 3.1(v)]{Blocki2005}; hence \(u_j\) is
\(m\)-subharmonic on the punctured ball.  Since \(u(z)\to-\infty\), there is
\(r_j>0\) such that \(u<-j\) on
\(0<|z|<r_j\).  Thus \(u_j\equiv-j\) there.  After setting
\(u_j(0)=-j\), the extension is constant on \(B(0,r_j)\) and hence is
\(m\)-subharmonic at the origin.

Moreover,
\[
 u_{j+1}\le u_j,\qquad u_j\downarrow u.
\]
A decreasing sequence of \(m\)-subharmonic functions has an
\(m\)-subharmonic limit unless the limit is identically \(-\infty\)
\cite[Proposition 1(6)]{BenaliGhiloufi2018}.  Indeed, because the limit is
not identically \(-\infty\), the standard convergence theorem for
subharmonic functions gives \(u_j\to u\) in \(L^1_{\loc}\).  Hence
\(\ddc u_j\to\ddc u\) in the sense of distributions.  Wedging with fixed
smooth \(m\)-positive forms is continuous under distributional convergence,
and the cone of positive currents is closed.  Thus the defining inequalities
pass to the limit.
\end{proof}

\section{Power-logarithmic singularities}
\label{sec:models}

For \(\alpha,\gamma\in\mathbb R\) and \(0<|z|<1\), set
\[
  c_m=\frac{n-m}{m},\qquad L=-\log |z|^2,
\]
and consider
\[
 u_{\alpha,\gamma}(z)
 =-|z|^{-2\alpha}L^{-\gamma}.
\]

\begin{theorem}
\label{thm:radial-classification}
The following assertions hold on a sufficiently small punctured ball.
\begin{enumerate}[label=\textup{(\roman*)}]
\item If \(\alpha>0\), then \(u_{\alpha,\gamma}\) is
\(m\)-subharmonic if and only if either \(\alpha<c_m\), or
\(\alpha=c_m\) and \(\gamma\ge0\).
\item If \(\alpha=0\), then \(u_{\alpha,\gamma}\) is \(m\)-subharmonic
if and only if \(\gamma\le0\).
\item If \(\alpha<0\), then \(u_{\alpha,\gamma}\) is not
\(m\)-subharmonic near zero.
\end{enumerate}
Every admissible model with a singularity at zero extends across zero as an
\(m\)-subharmonic function.
\end{theorem}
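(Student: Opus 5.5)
Our plan is to reduce everything to the radial Hessian test of \cref{lem:radial-test}, applied with \(t=|z|^2\), \(L=-\log t\) and
\[
 f(t)=-t^{-\alpha}L^{-\gamma}.
\]
The first step is to compute \(A=f'(t)\) and \(B=f'(t)+tf''(t)=(tf')'\). Since \(dL/dt=-1/t\), we get
\[
 tf'(t)=t^{-\alpha}L^{-\gamma}\bigl(\alpha-\gamma L^{-1}\bigr).
\]
Hence
\[
 A=t^{-\alpha-1}L^{-\gamma}\bigl(\alpha-\gamma L^{-1}\bigr).
\]
Differentiating \(tf'\) once more gives
\[
 B=t^{-\alpha-1}L^{-\gamma}\Bigl(-\alpha^2+2\alpha\gamma L^{-1}-\gamma(\gamma+1)L^{-2}\Bigr).
\]
Writing \(x=L^{-1}\), which tends to \(0^+\) as \(z\to0\), the common positive factor \(t^{-\alpha-1}L^{-\gamma}\) cancels. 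Everything then reduces to the sign of \(a(x)=\alpha-\gamma x\) and to the ratio \(b(x)/a(x)\), where \(b(x)=-\alpha^2+2\alpha\gamma x-\gamma(\gamma+1)x^2\).

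For (i), take \(\alpha>0\). Then \(A>0\) for small \(x\), so \cref{lem:radial-test} applies, and the condition is \(b(x)+c_m a(x)\ge0\). Expanding,
\[
 b+c_m a=\alpha(c_m-\alpha)+\gamma(2\alpha-c_m)x-\gamma(\gamma+1)x^2 .
\]
If \(\alpha<c_m\), the constant term is positive, so the inequality holds near \(0\). If \(\alpha>c_m\), it fails. If \(\alpha=c_m\), the expression becomes \(\gamma c_m x-\gamma(\gamma+1)x^2\). For \(\gamma>0\) this is positive for small \(x\), and for \(\gamma<0\) it is negative. The endpoint \(\gamma=0\) gives \(0\), which is \(G_m\) itself and is admissible.

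For (ii), take \(\alpha=0\). Then \(a=-\gamma x\), which is positive only when \(\gamma<0\), and in that case \(b=-\gamma(\gamma+1)x^2\). The test requires \(-\gamma(\gamma+1)x^2\ge -c_m(-\gamma x)\), i.e. \(-\gamma(\gamma+1)x\ge c_m\gamma\). Dividing by \(-\gamma>0\) this becomes \((\gamma+1)x\ge -c_m\), which holds for small \(x\). The case \(\gamma=0\) is a constant, hence admissible. If \(\gamma>0\), then \(A<0\), so \(\sigma_1=(n-1)A+B<0\) near \(0\) because \(b=O(x^2)\) while \((n-1)a\sim-(n-1)\gamma x\). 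So \(u\) is not even subharmonic.

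For (iii), take \(\alpha<0\). Then \(a\to\alpha<0\) and \(b\to-\alpha^2\), so \(\sigma_1\) has the sign of \(n\alpha-\alpha^2<0\), and \(u\) is superharmonic rather than subharmonic near \(0\). The only care needed throughout is that when \(A\le0\) we cannot use the ratio criterion of \cref{lem:radial-test}. Instead we use \(\sigma_1<0\) directly, and where \(A=0\) would occur identically we check it separately.

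For the extension claim, every admissible model with a genuine singularity either has \(\alpha>0\), so \(u\to-\infty\) as \(z\to0\), or has \(\alpha=0\) and \(\gamma<0\), so \(u=-L^{|\gamma|}\to-\infty\). In both cases \cref{lem:truncation} gives the extension. The main obstacle is bookkeeping in the critical case \(\alpha=c_m\), where the leading terms cancel and the sign is decided by the \(x\)-order term. I will double-check the coefficient \(2\alpha\gamma\) in \(b\) there, since an error would flip the \(\gamma\ge0\) conclusion.
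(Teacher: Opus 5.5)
Your proposal is correct and follows essentially the same route as the paper. You compute \(A\) and \(B\) for the radial profile and apply \Cref{lem:radial-test} when \(A>0\); at \(\alpha=c_m\) your expansion \(b+c_ma=\gamma x\bigl(c_m-(\gamma+1)x\bigr)\) has the same sign as the paper's \(\gamma(Lc_m-\gamma-1)\), and your coefficient \(2\alpha\gamma\) is correct. You then use \(\sigma_1<0\) when \(A<0\) and invoke \Cref{lem:truncation} for the extension. One harmless slip: in (iii) the limiting sign of \(\sigma_1\) is that of \((n-1)\alpha-\alpha^2\), not \(n\alpha-\alpha^2\), and this is still negative for \(\alpha<0\).
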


\begin{proof}
Writing \(t=|z|^2\), one computes
\[
 A=t^{-\alpha-1}L^{-\gamma}
 \left(\alpha-\frac\gamma L\right)
\]
and, whenever \(A\ne0\),
\[
 \frac BA=-\alpha+\frac\gamma L
 -\frac{\gamma}{L^2(\alpha-\gamma/L)}.
\]
Assume first that \(\alpha>0\).  Then \(A>0\) for large \(L\) and
\(B/A\to-\alpha\).  If \(\alpha<c_m\), the inequality
\(B/A\ge-c_m\) is eventually strict; if \(\alpha>c_m\), it eventually
fails.  At \(\alpha=c_m\), the sign of the difference between the displayed
ratio and \(-c_m\) is, for large \(L\), the sign of
\[
 \gamma(Lc_m-\gamma-1).
\]
It is nonnegative for all sufficiently large \(L\) exactly when
\(\gamma\ge0\).

Next let \(\alpha=0\).  If \(\gamma\ne0\), direct differentiation gives
\[
 A=-\gamma t^{-1}L^{-\gamma-1},\qquad
 B=-\gamma(\gamma+1)t^{-1}L^{-\gamma-2},
 \qquad
 \frac BA=\frac{\gamma+1}{L}.
\]
For \(\gamma<0\), one has \(A>0\) and \(B/A\to0>-c_m\), so
\Cref{lem:radial-test} proves \(m\)-subharmonicity.  For \(\gamma=0\),
the function is the constant \(-1\).  For \(\gamma>0\), \(A<0\), and
the trace of the complex Hessian is
\[
 \sigma_1=(n-1)A+B
 =A\left(n-1+\frac{\gamma+1}{L}\right)<0
\]
for large \(L\).  The function is then not even subharmonic.

Finally, if \(\alpha<0\), then \(A<0\) for large \(L\), while
\[
 \frac BA\longrightarrow-\alpha>0.
\]
Consequently
\[
 \sigma_1=A\left(n-1+\frac BA\right)<0
\]
near zero.  Again the function is not subharmonic.  Every admissible
singular model tends to \(-\infty\) at zero, so
\Cref{lem:truncation} gives the claimed extension.
\end{proof}

\begin{proposition}
\label{prop:radial-lp}
If \(\alpha>0\) and \(p>0\), then
\[
 u_{\alpha,\gamma}\in L^p_{\loc}
 \quad\Longleftrightarrow\quad
 \begin{cases}
 p<n/\alpha,\quad\text{or}\\
 p=n/\alpha\ \text{and}\ \gamma p>1.
 \end{cases}
\]
If \(\alpha\le0\), then \(u_{\alpha,\gamma}\in L^p_{\loc}\) for every
\(p>0\).
\end{proposition}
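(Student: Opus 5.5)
The plan is to reduce local integrability to a one-dimensional integral in polar coordinates. The function \(u_{\alpha,\gamma}\) is defined and continuous (in fact smooth) on the punctured ball \(0<|z|<1\), so the only possible obstruction to \(L^p_{\loc}\) near points of the punctured ball is absent. Everything reduces to integrability near the origin on a small ball \(B(0,r_0)\) with \(r_0<1\), chosen so that \(L\ge L_0>1\) there. Using \(d\lambda = \sigma_{2n-1}\rho^{2n-1}\,d\rho\) with \(\rho=|z|\), I would write
\[
 \int_{B(0,r_0)}|u_{\alpha,\gamma}|^p\,d\lambda
 =\sigma_{2n-1}\int_0^{r_0}\rho^{-2\alpha p+2n-1}\bigl(-2\log\rho\bigr)^{-\gamma p}\,d\rho .
\]
I would then substitute \(s=-\log\rho\), so that \(\rho=e^{-s}\) and \(d\rho=-e^{-s}ds\). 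This turns the integral into
\[
 c\int_{s_0}^{\infty}e^{-(2n-2\alpha p)s}\,(2s)^{-\gamma p}\,ds .
\]

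Next I would split into cases according to the sign of the exponent \(2n-2\alpha p\).

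If \(2n-2\alpha p>0\), the exponential decay beats any power of \(s\), so the integral converges for every \(\gamma\).

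If \(2n-2\alpha p<0\), the integrand grows exponentially, so the integral diverges for every \(\gamma\).

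If \(2n-2\alpha p=0\), which for \(\alpha>0\) is exactly \(p=n/\alpha\), the integral becomes \(\int^\infty s^{-\gamma p}ds\). This converges iff \(\gamma p>1\).

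For \(\alpha>0\), the condition \(2n-2\alpha p>0\) is equivalent to \(p<n/\alpha\), which gives the stated dichotomy including the endpoint. For \(\alpha\le0\), the exponent \(2n-2\alpha p\ge 2n>0\) for every \(p>0\), so the first case always applies and \(u_{\alpha,\gamma}\in L^p_{\loc}\) for all \(p>0\).

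There is a possible subtlety at \(\alpha=0\) with \(\gamma>0\), where \(|u|=L^{-\gamma}\) is bounded near zero, and with \(\gamma<0\), where it grows only logarithmically. Both are covered by the exponential-decay case, since \(e^{-2ns}\) dominates \(s^{|\gamma| p}\). There is no genuine obstacle; the only point requiring care is the endpoint \(p=n/\alpha\), where the comparison with \(\int^\infty s^{-\gamma p}\,ds\) must be made exactly rather than up to \(\varepsilon\)-losses. This is precisely why the change of variables \(s=-\log\rho\) is used, as it makes the endpoint integral an exact power integral.

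Since local integrability near \(0\) does not depend on whether \(u\) is extended as \(-\infty\) at the origin (a null set), the statement applies equally to the extensions given by \Cref{thm:radial-classification}.
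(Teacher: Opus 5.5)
Your proof is correct and follows essentially the same route as the paper: polar integration reduces everything to a one-variable integral near the origin, and a logarithmic substitution turns the critical case \(p=n/\alpha\) into the exact power integral \(\int^\infty s^{-\gamma p}\,ds\). The only cosmetic difference is that you apply \(s=-\log\rho\) uniformly, so every case is read off from the sign of \(2n-2\alpha p\), whereas the paper first substitutes \(t=r^2\), compares powers of \(t\) in the non-critical cases, and uses \(L=-\log t\) only at the critical exponent and for \(\alpha\le0\).
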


\begin{proof}
Polar integration, followed by \(t=r^2\), gives
\[
 \int_{|z|<\varepsilon}|u_{\alpha,\gamma}|^p\,dV
 \asymp
 \int_0^{\varepsilon^2}
 t^{n-1-\alpha p}(-\log t)^{-\gamma p}\,dt.
 \tag{3.1}\label{eq:power-log-integral}
\]
Suppose first that \(\alpha>0\).  The integral in
\eqref{eq:power-log-integral} converges when \(n-1-\alpha p>-1\),
equivalently when \(p<n/\alpha\), and diverges when \(p>n/\alpha\).  At the
critical value
\(p=n/\alpha\), \eqref{eq:power-log-integral} becomes
\[
 \int_0^{\varepsilon^2}
 \frac{(-\log t)^{-\gamma p}}{t}\,dt.
\]
With \(L=-\log t\), this is
\[
 \int_{-\log\varepsilon^2}^{\infty}L^{-\gamma p}\,dL,
\]
which converges exactly when \(\gamma p>1\).  At
\(\gamma=\alpha/n\), it is the logarithmically divergent integral
\(\int^\infty dL/L\).

If \(\alpha=0\), the same substitution gives
\[
 \int_0^{\varepsilon^2}t^{n-1}(-\log t)^{-\gamma p}\,dt
 =\int_{-\log\varepsilon^2}^{\infty}
 e^{-nL}L^{-\gamma p}\,dL<\infty.
\]
The exponential factor dominates every power of \(L\).  In particular,
the admissible unbounded models with \(\gamma<0\) have only logarithmic
growth, and all their positive powers are locally integrable.  If
\(\alpha<0\),
write \(\alpha=-a\), \(a>0\).  Then
\eqref{eq:power-log-integral} becomes
\[
 \int_{-\log\varepsilon^2}^{\infty}
 e^{-(n+ap)L}L^{-\gamma p}\,dL<\infty.
\]
Alternatively,
\(|u_{\alpha,\gamma}(z)|
=|z|^{2a}(-\log|z|^2)^{-\gamma}\to0\), so the model is locally bounded.
\end{proof}

The boundary model \(u_{c_m,0}=G_m\) therefore has the exact range
\[
 G_m\in L^p_{\loc}\quad\Longleftrightarrow\quad
 p<\pc,\qquad \pc=\frac{n}{c_m}=\frac{nm}{n-m}.
\]
Moreover, \(G_m\in L^{\pc,\infty}_{\loc}\): for every sufficiently small
\(\varepsilon>0\) and all large \(s\),
\[
 \Vol\bigl(B(0,\varepsilon)\cap\{|G_m|>s\}\bigr)\asymp s^{-\pc}.
\]

\section{Failure of direct exponential analogues}
\label{sec:exponential}

For an \(m\)-subharmonic germ \(u\) at the origin, define the
exponential-integrability ideals
\[
 \mathcal I_m(u)_0
 =\{F\in\mathcal O_{\C^n,0}:|F|^2e^{-u}\in L^1_{\loc}\}
\]
and
\[
\mathcal I_{m,+}(u)_0
=\bigcup_{\varepsilon>0}\mathcal I_m((1+\varepsilon)u)_0
=\bigcup_{p>1}\mathcal I_m(pu)_0.
\]
When \(m=n\), the strong openness theorem of Guan--Zhou asserts that
\(\mathcal I_n(u)_0=\mathcal I_{n,+}(u)_0\) \cite{GuanZhou2015}; the special
case \(F\equiv1\) was proved earlier by Berndtsson
\cite{Berndtsson2013}.  The corresponding statement fails for every
\(m<n\).

\begin{theorem}
\label{thm:strong-open-failure}
Let \(n\ge2\) and \(1\le m<n\).  On \(|z|<e^{-1}\), set
\[
 u(z)=n\log|z|^2+2\log(-\log|z|^2),\qquad u(0)=-\infty.
\]
Then \(u\in\SH_m\), and for every \(p>0\),
\[
 e^{-pu}\in L^1_{\loc}\quad\Longleftrightarrow\quad p\le1.
\]
Consequently \(1\in\mathcal I_m(u)_0\) but
\(1\notin\mathcal I_{m,+}(u)_0\).
\end{theorem}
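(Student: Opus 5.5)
The plan is to verify \(m\)-subharmonicity with the radial test of \Cref{lem:radial-test}, extend across the origin by \Cref{lem:truncation}, and then settle integrability by the same polar integration used in \Cref{prop:radial-lp}.

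\textbf{Step 1: the radial quantities.} Write \(t=|z|^2\) and \(L=-\log t\), so \(L>2\) on \(|z|<e^{-1}\). Then \(u=f(t)=n\log t+2\log(-\log t)\). Since \(dL/dt=-1/t\), I compute
\[
 f'(t)=\frac1t\Bigl(n-\frac2L\Bigr),\qquad
 f''(t)=-\frac1{t^2}\Bigl(n-\frac2L\Bigr)+\frac{2}{t^2L^2}\cdot(-1),
\]
where the last term comes from \(\frac{d}{dt}(-2/L)=-2/(tL^2)\). Hence
\[
 A=\frac{n-2/L}{t}>0,\qquad B=f'+tf''=-\frac{2}{tL^2},\qquad
 \frac BA=-\frac{2}{L(nL-2)} .
\]

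\textbf{Step 2: \(m\)-subharmonicity and extension.} By \Cref{lem:radial-test}, since \(A>0\), it suffices to check \(B/A\ge-c_m\) with \(c_m=(n-m)/m\). Because \(m<n\), we have \(c_m\ge 1/m\ge 1/(n-1)>0\) in the worst case. Meanwhile \(|B/A|=2/(L(nL-2))\) is small for \(L>2\), namely at most \(2/(2(2n-2))=1/(2n-2)\). Then \(c_m=(n-m)/m\ge 1/(n-1)>1/(2n-2)\), so the inequality holds on the whole punctured ball.

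This uniform margin on all of \(|z|<e^{-1}\) is the only delicate point. If one only wanted it near \(0\), the conclusion would be immediate from \(B/A\to0\).

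Since \(n\log t\to-\infty\) dominates \(2\log L\), we get \(u\to-\infty\) at \(0\). \Cref{lem:truncation} then gives \(u\in\SH_m(B(0,e^{-1}))\).

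\textbf{Step 3: integrability.} We have \(e^{-pu}=t^{-np}L^{-2p}\), so polar integration as in \eqref{eq:power-log-integral} gives
\[
 \int_{|z|<\varepsilon}e^{-pu}\,dV\asymp\int_0^{\varepsilon^2}t^{n-1-np}L^{-2p}\,dt .
\]
We split into three cases.
\begin{itemize}
\item For \(p<1\), the exponent \(n-1-np>-1\), so the integral converges.
\item For \(p>1\), the exponent \(n-1-np<-1\), and the logarithmic factor cannot save it, so the integral diverges.
\item For \(p=1\), substituting \(L=-\log t\) gives \(\int^\infty L^{-2}\,dL<\infty\), so the integral converges.
\end{itemize}
Thus \(e^{-pu}\in L^1_{\loc}\) exactly when \(p\le1\).

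Away from the origin \(u\) is continuous, so local integrability only needs checking at \(0\). Taking \(F\equiv1\), we get \(1\in\mathcal I_m(u)_0\), while \(1\notin\mathcal I_m(pu)_0\) for every \(p>1\), so \(1\notin\mathcal I_{m,+}(u)_0\).
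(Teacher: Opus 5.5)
Your proof is correct and follows essentially the same route as the paper. The paper computes the eigenvalues \(A=(n-2/L)/t\) and \(B=-2/(tL^2)\), checks positivity of the factored \(\sigma_k\) directly (equivalent to your use of the radial test), extends across \(0\) by the truncation lemma, and evaluates the same polar integral in the variable \(T=-\log|z|^2\) as \(\int e^{n(p-1)T}T^{-2p}\,dT\). Your explicit uniform margin \(|B/A|<1/(2n-2)<c_m\) on the whole ball usefully spells out a step the paper only asserts.
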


\begin{proof}
Put \(q=\log|z|^2\), \(T=-q\), and
\(h(q)=nq+2\log(-q)\).  The tangential and radial eigenvalues of the complex
Hessian are
\[
 a=\frac{n-2/T}{|z|^2}>0,\qquad
 b=-\frac{2}{T^2|z|^2}.
\]
For every \(1\le k\le m\),
\[
 \sigma_k=\binom{n-1}{k-1}a^{k-1}
 \left(\frac{n-k}{k}a+b\right)
\]
is positive on the chosen ball.  Since \(u(z)\to-\infty\) as
\(z\to0\), \Cref{lem:truncation} shows that the extension defined by
\(u(0)=-\infty\) remains \(m\)-subharmonic.  Moreover,
\(u=-nT+2\log T\), and a change of variables gives
\[
 \int_{|z|<\varepsilon}e^{-pu}\,dV
 \asymp\int_{-\log\varepsilon^2}^{\infty}
 e^{n(p-1)T}T^{-2p}\,dT.
\]
This integral converges if and only if \(p\le1\).  Since the radial
eigenvalue is negative, the example is not plurisubharmonic and hence does
not contradict the
Guan--Zhou theorem.
\end{proof}

With
\[
 \phi_m(r)=-\frac{1}
 {(n/m-1)r^{2(n/m-1)}}
\]
the Benali--Ghiloufi normalization \cite{BenaliGhiloufi2018} of the
\(m\)-Lelong number in terms of the spherical mean \(\lambda(u,a,r)\) is
\[
 \nu_u(a)=2\lim_{r\downarrow0}
 \frac{\lambda(u,a,r)}{\phi_m(r)}.
 \tag{4.1}\label{eq:lelong-spherical}
\]

\begin{theorem}
\label{thm:skoda-failure}
For every \(1\le m<n\), there exists an \(m\)-subharmonic germ \(u\) such
that \(\nu_u(0)=0\), yet \(e^{-\gamma u}\notin L^1_{\loc}\) for every
\(\gamma>0\).
\end{theorem}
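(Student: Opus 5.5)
Take a radial model from \Cref{thm:radial-classification} with \(0<\alpha<c_m\), for instance \(u=u_{\alpha,0}=-|z|^{-2\alpha}\) with \(\alpha=c_m/2\) (any \(\alpha\in(0,c_m)\) works).

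\emph{Step 1: \(m\)-subharmonicity.} By part (i) of \Cref{thm:radial-classification}, \(u\) is \(m\)-subharmonic on a small punctured ball. Since \(u\to-\infty\) at the origin, \Cref{lem:truncation} extends it across \(0\) with \(u(0)=-\infty\).

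\emph{Step 2: the Lelong number vanishes.} Because \(u\) is radial, the spherical mean is \(\lambda(u,0,r)=u|_{|z|=r}=-r^{-2\alpha}\). With \(\phi_m(r)=-\,r^{-2c_m}/c_m\), formula \eqref{eq:lelong-spherical} gives
\[
 \nu_u(0)=2\lim_{r\downarrow0}c_m\,r^{2(c_m-\alpha)}=0,
\]
since \(\alpha<c_m\).

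\emph{Step 3: failure of exponential integrability.} Fix \(\gamma>0\). Then \(e^{-\gamma u}=\exp(\gamma|z|^{-2\alpha})\). Polar coordinates and \(t=|z|^2\) give
\[
 \int_{|z|<\varepsilon}e^{-\gamma u}\,dV\asymp\int_0^{\varepsilon^2}t^{n-1}\exp(\gamma t^{-\alpha})\,dt .
\]
The exponential growth of \(\exp(\gamma t^{-\alpha})\) beats any power of \(t\) as \(t\to0\), so the integrand tends to \(+\infty\) and the integral diverges. Hence \(e^{-\gamma u}\notin L^1_{\loc}\) for every \(\gamma>0\).

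\emph{Main obstacle.} The only delicate point is the normalization in \eqref{eq:lelong-spherical}. The spherical mean of a radial function is its value on the sphere, and \(\phi_m\) has the exact order \(r^{-2c_m}\), so a strictly weaker pole gives limit zero. One could instead take \(\alpha=c_m\) with \(\gamma>0\) in \(u_{c_m,\gamma}\): the logarithmic factor \(L^{-\gamma}\to0\) also forces the limit to vanish, which gives an even sharper example at the critical power. The choice \(\alpha<c_m\) avoids that bookkeeping. Every \(\alpha>0\) yields non-integrability of \(e^{-\gamma u}\).
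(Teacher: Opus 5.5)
Your proof is correct and is essentially the paper's own argument: the same model \(-|z|^{-2\alpha}\) with \(0<\alpha<c_m\), \(m\)-subharmonicity from \Cref{thm:radial-classification} and \Cref{lem:truncation}, the ratio \(\lambda(u,0,r)/\phi_m(r)=c_m r^{2(c_m-\alpha)}\to0\), and divergence of the radial exponential integral. One small tightening: an integrand that tends to \(+\infty\) need not have a divergent integral, so state explicitly that \(t^{n-1}e^{\gamma t^{-\alpha}}\ge t^{-1}\) for all sufficiently small \(t\), which is what forces divergence.
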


\begin{proof}
Choose \(0<\alpha<c_m\) and take
\(u(z)=-|z|^{-2\alpha}\).  By
\Cref{thm:radial-classification}, this is \(m\)-subharmonic across zero.
Formula~\eqref{eq:lelong-spherical} gives
\[
 \frac{\lambda(u,0,r)}{\phi_m(r)}
 =c_m r^{2(c_m-\alpha)}\longrightarrow0.
\]
On the other hand,
\[
 \int_{|z|<\varepsilon}e^{-\gamma u}\,dV
 \asymp\int_0^\varepsilon
 e^{\gamma r^{-2\alpha}}r^{2n-1}\,dr=\infty.
\]
Since the radial Hessian eigenvalue is negative, this example cannot occur
in the plurisubharmonic case.
\end{proof}

\begin{remark}
\label{rem:finite-energy-range}
In \cite{Lin2025}, we studied the finite-energy range of the \(m\)-Hessian
operator on compact K\"ahler manifolds.  The corresponding Monge--Amp\`ere
argument relies on Skoda's exponential-integrability theorem, for which
there is no direct analogue when \(m<n\).  The proof in \cite{Lin2025}
therefore uses capacity estimates instead.  \Cref{thm:skoda-failure} shows
that this difference is structural: even the condition \(\nu_u(0)=0\) does
not guarantee \(e^{-\gamma u}\in L^1_{\loc}\) for any \(\gamma>0\).  Hence no
criterion based only on the \(m\)-Lelong number can provide the missing
exponential-integrability estimate in general.  This motivates the local
capacity scale introduced in \Cref{sec:capacity}.
\end{remark}

\section{Polynomial integrability exponents}
\label{sec:polynomial}

For \(u\in L^1_{\loc}(\Omega)\) and a compact set \(K\Subset\Omega\), define
\[
 \iota_K(u)=\sup\{p>0:|u|^p\in L^1(U)
 \text{ for some open }U\text{ with }K\Subset U\Subset\Omega\}.
\]
Benali--Ghiloufi showed \cite{BenaliGhiloufi2018} that \(\iota_K(u)\) is the
supremum of all powers \(\alpha>0\) for which the volumes of the deep
sublevel sets are \(O(t^{-\alpha})\).  They also proved that
\[
 \iota_x(u)\ge\frac{n}{n-m},\qquad
 \nu_u(x)>0\ \Longrightarrow\
 \iota_x(u)\le\frac{nm}{n-m}.
 \tag{5.1}\label{eq:bg-exponent-bounds}
\]
The first inequality is a universal lower bound, whereas the second is an
upper bound under the hypothesis \(\nu_u(x)>0\).  In particular, the latter
does not establish B{\l}ocki's conjectural lower bound.

\begin{proposition}
\label{prop:point-lsc}
Let \(u\in L^1_{\loc}(\Omega)\).  Then the map
\[
 a\longmapsto\iota_a(u)
\]
is lower semicontinuous on \(\Omega\).
\end{proposition}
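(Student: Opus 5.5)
The plan is to use the definition of \(\iota_K\) with \(K=\{a\}\) directly; no property of \(m\)-subharmonicity is needed, which is why the statement is for arbitrary \(u\in L^1_{\loc}(\Omega)\). Here \(\iota_a(u)\) means \(\iota_{\{a\}}(u)\): the supremum of those \(p>0\) for which \(|u|^p\) is integrable on some open neighbourhood \(U\Subset\Omega\) of \(a\). Lower semicontinuity at \(a\) means
\[
 \liminf_{b\to a}\iota_b(u)\ge\iota_a(u).
\]
So it suffices to show the following: for every \(p<\iota_a(u)\) there is a neighbourhood \(V\) of \(a\) with \(\iota_b(u)\ge p\) for all \(b\in V\). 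If \(\iota_a(u)=0\), or the supremum is over the empty set (convention \(0\)), there is nothing to prove.

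\emph{Step 1.} Fix \(0<p<\iota_a(u)\). By the definition of the supremum, there is \(p'\) with \(p<p'\le\iota_a(u)\) such that \(|u|^{p'}\in L^1(U)\) for some open \(U\) with \(\{a\}\Subset U\Subset\Omega\). On a set of finite volume, \(|u|^{p}\le 1+|u|^{p'}\), so \(|u|^p\in L^1(U)\) as well. This monotonicity in \(p\) ensures the set of admissible exponents is an interval starting at \(0\).

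\emph{Step 2.} Take \(V=U\), which is an open neighbourhood of \(a\). For any \(b\in U\), choose a small ball \(B(b,\rho)\Subset U\). Since \(|u|^{p'}\in L^1(B(b,\rho))\) and \(\{b\}\Subset B(b,\rho)\Subset\Omega\), the ball is an admissible open set in the definition of \(\iota_b(u)\). Hence \(\iota_b(u)\ge p'>p\) for every \(b\in U\).

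\emph{Step 3.} Since \(p<\iota_a(u)\) was arbitrary, \(\liminf_{b\to a}\iota_b(u)\ge\iota_a(u)\). This also covers the case \(\iota_a(u)=+\infty\).

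There is no real obstacle. The only point requiring care is the bookkeeping in the definition: \(\iota_a\) refers to integrability on some open neighbourhood, not in the limit of shrinking neighbourhoods, and integrability on a neighbourhood of \(a\) is inherited by nearby points because that neighbourhood is also a neighbourhood of them. One could alternatively argue via the Benali--Ghiloufi characterization through volumes of deep sublevel sets, but the direct argument above avoids it.
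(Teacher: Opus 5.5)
Your argument is correct and is essentially the paper's proof: you fix a level below \(\iota_a(u)\), take an admissible exponent \(p'\) above it with \(|u|^{p'}\) integrable on a neighbourhood \(U\Subset\Omega\) of \(a\), and note that \(U\) (or a small ball inside it) is also an admissible neighbourhood for every \(b\in U\), so \(\iota_b(u)\ge p'\). The only differences are cosmetic: the paper phrases the conclusion as openness of \(\{b:\iota_b(u)>c\}\) and shrinks to some \(V\Subset U\), whereas you take \(V=U\) directly, which works equally well.
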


\begin{proof}
Fix \(a\in\Omega\) and \(c<\iota_a(u)\).  Choose \(q\) such that
\(\max\{c,0\}<q<\iota_a(u)\).  By the definition of \(\iota_a(u)\) and the monotonicity
of local \(L^p\)-integrability in \(p\), the function \(|u|^q\) is integrable
on a neighborhood \(U\) of \(a\).  Choose a neighborhood \(V\) of \(a\)
with \(V\Subset U\).  Then \(\iota_b(u)\ge q>c\) for every \(b\in V\).
Thus \(\{b\in\Omega:\iota_b(u)>c\}\) is open, which proves the assertion.
\end{proof}

In contrast, the admissible set of exponents need not be open at its
endpoint.

\begin{proposition}
\label{prop:closed-endpoint}
Let \(0<\alpha<c_m\) and \(\gamma>\alpha/n\).  On a sufficiently small ball
centered at the origin, define
\[
 \varphi_{\alpha,\gamma}(z)
 =-|z|^{-2\alpha}(-\log|z|^2)^{-\gamma},\qquad z\ne0,
\]
and set \(\varphi_{\alpha,\gamma}(0)=-\infty\).  Then
\(\varphi_{\alpha,\gamma}\) is \(m\)-subharmonic and
satisfies
\[
 \{p>0:|\varphi_{\alpha,\gamma}|^p
 \text{ is integrable in a neighborhood of }0\}
 =\left(0,\frac n\alpha\right].
\]
\end{proposition}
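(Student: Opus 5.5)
The proposition is essentially a corollary of \Cref{thm:radial-classification} and \Cref{prop:radial-lp}, since \(\varphi_{\alpha,\gamma}=u_{\alpha,\gamma}\). The plan has two parts: first establish \(m\)-subharmonicity, then read off the exact set of integrable exponents.

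For \(m\)-subharmonicity, since \(0<\alpha<c_m\), \Cref{thm:radial-classification}(i) shows that \(u_{\alpha,\gamma}\) is \(m\)-subharmonic on a sufficiently small punctured ball, for every real \(\gamma\). In particular, no sign condition on \(\gamma\) is needed because \(\alpha\) lies strictly inside \((0,c_m)\). Concretely, \(B/A\to-\alpha>-c_m\) as \(L\to\infty\), and the ratio \(\gamma/L\) is small, so \(A>0\) near the origin. Moreover \(\varphi_{\alpha,\gamma}(z)\to-\infty\) as \(z\to0\): the factor \(|z|^{-2\alpha}\) dominates any power of \(L\). Hence \Cref{lem:truncation} shows that the extension with \(\varphi_{\alpha,\gamma}(0)=-\infty\) is \(m\)-subharmonic on the small ball.

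For the integrability range, apply \Cref{prop:radial-lp} with \(\alpha>0\). It gives \(|\varphi_{\alpha,\gamma}|^p\in L^1\) near \(0\) if and only if either \(p<n/\alpha\), or \(p=n/\alpha\) and \(\gamma p>1\). At the endpoint \(p=n/\alpha\), the condition \(\gamma p>1\) reads \(\gamma n/\alpha>1\), which is exactly the hypothesis \(\gamma>\alpha/n\). So the endpoint is included, while every \(p>n/\alpha\) is excluded. Thus the set is \((0,n/\alpha]\).

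One point needs care: integrability in a neighborhood of \(0\) must also account for points away from the origin. On the small ball, \(\varphi_{\alpha,\gamma}\) is continuous and locally bounded away from \(0\), so only the behavior at the origin matters. This is exactly what the polar integral \eqref{eq:power-log-integral} captures. No step is a real obstacle; the only thing to check is the sign conventions in the endpoint computation, namely that at \(p=n/\alpha\) the integral reduces to \(\int^\infty L^{-\gamma p}\,dL\), which converges precisely when \(\gamma p>1\).
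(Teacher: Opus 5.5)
Your proposal is correct and follows the paper's proof. The paper gets $m$-subharmonicity from the radial classification theorem after shrinking the ball, with the extension across the origin built into that theorem; you make this extension explicit through the truncation lemma. It then reads off the exponent set $(0,n/\alpha]$ from the power-logarithmic $L^p$ criterion, where the endpoint $p=n/\alpha$ is included because $\gamma n/\alpha>1$.
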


\begin{proof}
After shrinking the ball if necessary,
\Cref{thm:radial-classification} gives the \(m\)-subharmonicity.
\Cref{prop:radial-lp} gives convergence for \(p<n/\alpha\) and divergence
for \(p>n/\alpha\).  At \(p=n/\alpha\), the integral converges precisely
when \(\gamma n/\alpha>1\), which follows from \(\gamma>\alpha/n\).
\end{proof}

\begin{table}[ht]
\centering
\small
\caption{Selected results on polynomial integrability.  Here
\(\pc=nm/(n-m)\).}
\label{tab:results}
\begin{tabularx}{\textwidth}{@{}>{\raggedright\arraybackslash}p{0.26\textwidth}
 >{\raggedright\arraybackslash}p{0.20\textwidth}
 >{\raggedright\arraybackslash}X@{}}
\toprule
Setting & Exponent range & Additional hypothesis or status\\
\midrule
Arbitrary local \(m\)-subharmonic functions
& \(p<n/(n-m)\)
& B{\l}ocki's universal bound \cite{Blocki2005}\\
Arbitrary local \(m\)-subharmonic functions
& \(p<\pc\)
& B{\l}ocki's conjecture; open for \(1<m<n\)\\
Finite-mass functions with relatively compact deep sublevel sets
& \(p<\pc\)
& Capacity estimate \cite{DinewKolodziej2014}\\
Cegrell class \(\mathcal E_{p,m}\)
& \(s<(m+p)n/(n-m)\)
& Finite \((p,m)\)-energy \cite{AhagCzyz2020}\\
Compact K{\"a}hler background
& \(p<n/(n-m)\)
& Arbitrary \((\omega,m)\)-subharmonic functions
  \cite{LuNguyen2015}\\
Compact Hermitian background
& \(p<n/(n-m)\)
& Global theorem \cite{Fang2026}\\
Local capacity condition \(\mathrm C_{m,\delta}\)
& \(s<(m+\delta)n/(n-m)\)
& Local capacity decay of order \(m+\delta\)\\
\bottomrule
\end{tabularx}
\end{table}

\section{B{\l}ocki's conjecture and the endpoint}
\label{sec:blocki}

The fundamental solution proves that the strong endpoint assertion
\(\SH_m\subset L^{\pc}_{\loc}\) is false.  The genuine conjecture is the
strict statement
\[
 \SH_m(\Omega)\subset L^p_{\loc}(\Omega)
 \quad\text{for every }p<\pc.
 \tag{6.1}\label{eq:blocki-conjecture}
\]
As summarized in \Cref{tab:results}, \eqref{eq:blocki-conjecture} has not
been established for arbitrary local \(m\)-subharmonic functions when
\(1<m<n\), and no counterexample is known in the cited literature.

A useful stronger target is the weak endpoint
\[
 u\in L^{\pc,\infty}_{\loc}.
\]
Indeed, if \(K\Subset\Omega\) and
\[
 \Vol(K\cap\{|u|>s\})\le C_K s^{-\pc}
 \qquad\text{for all sufficiently large }s,
\]
then the layer-cake formula gives \(u\in L^p_{\loc}\) for every
\(p<\pc\), as in \eqref{eq:blocki-conjecture}.  The fundamental solution
satisfies this estimate sharply, making the weak endpoint a natural
intermediate target.

\section{A scale of Hessian-capacity criteria}
\label{sec:capacity}

For a bounded \(m\)-hyperconvex domain \(D\) and a Borel set
\(E\Subset D\), define the relative Hessian capacity
\[
 \mCap_m(E,D)=
 \sup_{\substack{\psi\in\SH_m(D)\\-1\le\psi\le0}}
 \int_E(\ddc\psi)^m\wedge\beta^{n-m}.
\]

\begin{definition}
Let \(u\in\SH_m(\Omega)\) and \(\delta\ge0\).  We say that \(u\) satisfies
\(\Cmd\) if, for every
\(K\Subset\Omega\), there exist a bounded \(m\)-hyperconvex domain \(D\)
with \(K\Subset D\subseteq\Omega\), a constant \(A\in\mathbb R\), and
constants \(C,T>0\) such that
\[
 \mCap_m(K\cap\{u<A-t\},D)\le C t^{-(m+\delta)},
 \qquad t\ge T.
 \tag{\(\mathrm C_{m,\delta}\)}\label{eq:capacity-condition}
\]
The critical member \(\mathrm C_{m,0}\) is denoted by \(\mathrm C_m\).
\end{definition}

The volume-capacity estimate of Dinew--Ko{\l}odziej
\cite{DinewKolodziej2014} states that, for every
\[
 1<\tau<\frac n{n-m},
\]
there exists a constant \(C_{D,\tau}>0\) such that
\[
 \Vol(E)\le C_{D,\tau}\mCap_m(E,D)^\tau,
 \qquad E\Subset D.
 \tag{7.1}\label{eq:volume-capacity}
\]

\begin{theorem}
\label{thm:capacity}
If \(u\in\SH_m(\Omega)\) satisfies \(\Cmd\), then
\[
 u\in L^s_{\loc}(\Omega)
 \quad\text{for every}\quad
 0<s<\frac{(m+\delta)n}{n-m}.
\]
\end{theorem}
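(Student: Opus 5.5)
The proof combines the capacity decay \eqref{eq:capacity-condition} with the volume-capacity inequality \eqref{eq:volume-capacity}, and then integrates the resulting volume tail bound using the layer-cake formula. Fix \(K\Subset\Omega\) and \(0<s<(m+\delta)n/(n-m)\). Since local integrability is local, we may assume \(K\) is a closed ball, so it has nonempty interior; interiors of such balls cover \(\Omega\). Take \(D\), \(A\), \(C\), \(T\) as in the definition of \(\Cmd\) for this \(K\). Because \(u\) is upper semicontinuous, it is bounded above on \(K\), say \(u\le M\) there. Hence \(|u|\) can only be large where \(u\) is very negative: for \(t\) large, \(\{|u|>t\}\cap K\subset K\cap\{u<A-(t+A)\}\), possibly after adjusting \(A\) and \(T\).

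Next, choose \(\tau\) with \(1<\tau<n/(n-m)\) and so close to \(n/(n-m)\) that \(s<(m+\delta)\tau\). This is possible precisely because \(s<(m+\delta)n/(n-m)\). Apply \eqref{eq:volume-capacity} with \(E=K\cap\{u<A-t\}\), which is relatively compact in \(D\). This gives
\[
 \Vol(K\cap\{u<A-t\})\le C_{D,\tau}\,\mCap_m(K\cap\{u<A-t\},D)^\tau
 \le C_{D,\tau}C^\tau t^{-(m+\delta)\tau},\qquad t\ge T.
\]
The set \(E\) is Borel because \(u\) is upper semicontinuous. Replacing \(t\) by \(t+A\), we obtain \(\Vol(K\cap\{|u|>t\})\le C' t^{-(m+\delta)\tau}\) for all \(t\ge T'\).

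Finally, we apply the layer-cake formula:
\[
 \int_K|u|^s\,dV=s\int_0^\infty t^{s-1}\Vol(K\cap\{|u|>t\})\,dt
 \le s\,T'^{\,s}\,\Vol(K)\;+\;sC'\int_{T'}^\infty t^{s-1-(m+\delta)\tau}\,dt.
\]
The last integral converges because \(s<(m+\delta)\tau\). Hence \(|u|^s\in L^1(K)\) for every compact \(K\), which is the claim.

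The only delicate point is the choice of \(\tau\) strictly below \(n/(n-m)\). The Dinew--Ko{\l}odziej inequality is not available at the endpoint, and this is exactly why the conclusion is the open range \(s<(m+\delta)n/(n-m)\) rather than a weak-type endpoint estimate. Everything else is routine bookkeeping: the shift by \(A\), and the use of the upper bound of \(u\) on \(K\) to pass from \(|u|\) to sublevel sets.
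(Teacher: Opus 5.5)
Your proof is correct and follows essentially the same route as the paper: choose $\tau\in(\max\{1,s/(m+\delta)\},n/(n-m))$, combine the Dinew--Ko{\l}odziej inequality with $\Cmd$ to get a volume tail of order $t^{-(m+\delta)\tau}$, and integrate with the layer-cake formula. The only cosmetic difference is that the paper works with $X=(A-u)_+$ and then uses $(-u)_+\le X+|A|$, whereas you shift $t$ by $A$ and use the upper bound of $u$ on $K$ directly; one harmless slip is that the first layer-cake term should be $T'^{\,s}\Vol(K)$ rather than $s\,T'^{\,s}\Vol(K)$, which does not affect finiteness.
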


\begin{proof}
Fix \(K\Subset\Omega\) and choose the data in \(\Cmd\).  Given
\[
 s<\frac{(m+\delta)n}{n-m},
\]
choose
\[
 \max\left\{1,\frac{s}{m+\delta}\right\}
 <\tau<\frac n{n-m}.
\]
Combining \eqref{eq:volume-capacity} with \(\Cmd\) gives
\[
 \Vol(K\cap\{u<A-t\})
 \le C't^{-(m+\delta)\tau}.
\]
For \(X=(A-u)_+\), the layer-cake formula yields
\[
\begin{aligned}
 \int_K X^s\,dV
 &=s\int_0^\infty t^{s-1}\Vol(K\cap\{X>t\})\,dt\\
 &\le T^s\Vol(K)
   +sC'\int_T^\infty t^{s-1-(m+\delta)\tau}\,dt
 <\infty,
\end{aligned}
\]
because \((m+\delta)\tau>s\).  Since \(u\) is locally bounded above, its
positive part is bounded on \(K\), while \((-u)_+\le X+|A|\).  Hence
\(|u|^s\in L^1(K)\).
\end{proof}

The conditions \(\mathrm C_{m,\delta}\) are invariant under replacing \(u\)
by an \(m\)-subharmonic function that differs from \(u\) by a locally
bounded function.  They are also preserved under positive rescaling and
under taking the maximum with a locally bounded \(m\)-subharmonic function.
For a negative function \(v\in\mathcal E_{0,m}(D)\), the standard sublevel
comparison estimate is
\[
 t^m\mCap_m(\{v<-s-t\},D)
 \le\int_{\{v<-s\}}(\ddc v)^m\wedge\beta^{n-m},
 \qquad s,t>0.
 \tag{7.2}\label{eq:comparison-capacity}
\]
See \cite[Lemma 5.2 and its proof]{AhagCzyz2020}.  It follows that negative
\(m\)-subharmonic functions with finite total Hessian mass and relatively
compact deep sublevel sets satisfy \(\mathrm C_m\)
\cite{DinewKolodziej2014}.  For the
fundamental solution, the capacity of a ball scales as \(r^{2(n-m)}\),
whereas the sublevel radius scales as \(t^{-m/(2(n-m))}\); together these
give the decay \(t^{-m}\).

\begin{proposition}
\label{prop:energy-capacity}
Let \(D\) be a bounded \(m\)-hyperconvex domain and \(p\ge0\).  If
\(u\in\mathcal E_{p,m}(D)\), then \(u\) satisfies
\(\mathrm C_{m,p}\).  Consequently,
\[
 u\in L^s(D)
 \quad\text{for every}\quad
 0<s<\frac{(m+p)n}{n-m}.
 \tag{7.3}\label{eq:energy-sobolev}
\]
In particular, every \(u\in\mathcal E_m(D)\) satisfies \(\mathrm C_m\)
locally.
\end{proposition}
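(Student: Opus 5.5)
The plan is to convert finite $(p,m)$-energy into a capacity decay of order $m+p$ via a weighted form of the sublevel comparison estimate \eqref{eq:comparison-capacity}, and then invoke \Cref{thm:capacity}.

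Recall that $u\in\mathcal E_{p,m}(D)$ means there are $u_j\in\mathcal E_{0,m}(D)$ decreasing to $u$ with
\[
 \sup_j e_p(u_j):=\sup_j\int_D(-u_j)^p(\ddc u_j)^m\wedge\beta^{n-m}<\infty .
\]
For $p=0$ this is finite total Hessian mass.

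\textbf{Step 1: capacity decay for $u_j$.} Apply \eqref{eq:comparison-capacity} with $s=t$:
\[
 t^m\mCap_m(\{u_j<-2t\},D)\le\int_{\{u_j<-t\}}(\ddc u_j)^m\wedge\beta^{n-m}.
\]
On $\{u_j<-t\}$ we have $(-u_j)^p/t^p>1$. Hence the right side is at most $t^{-p}e_p(u_j)$, and
\[
 \mCap_m(\{u_j<-2t\},D)\le 2^{m+p}\,e_p(u_j)\,(2t)^{-(m+p)},
\]
which is $\mathrm C_{m,p}$ for $u_j$ with constants uniform in $j$.

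\textbf{Step 2: passage to the limit.} Since $u_j\ge u$, the sets $\{u_j<-t\}$ increase in $j$ to $\{u<-t\}$. Relative Hessian capacity is continuous along increasing sequences of sets; this is standard for the outer capacity, and for open sublevel sets one can argue through compact subsets. Therefore
\[
 \mCap_m(\{u<-t\},D)\le C\,t^{-(m+p)}
\]
with $C=2^{m+p}\sup_j e_p(u_j)$.

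The main obstacle is to justify this step carefully. The fallback is to use the lower semicontinuity form: the capacity of a set is the supremum over its compact subsets, every compact subset of $\{u<-t\}$ is covered by the increasing open family $\{u_j<-t\}$, and therefore it lies in some $\{u_j<-t'\}$ with $t'$ slightly less than $t$. Alternatively, \cite[Lemma 5.2]{AhagCzyz2020} may already be stated directly for $\mathcal E_{p,m}$, in which case this step is immediate.

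\textbf{Step 3: conclusion.} For $K\Subset D$, the set $K\cap\{u<-t\}$ is contained in $\{u<-t\}$, and capacity is monotone. This gives $\mathrm C_{m,p}$ with $A=0$, the same domain $D$, and any $T>0$. \Cref{thm:capacity} then yields $u\in L^s_{\loc}(D)$ for every $s<(m+p)n/(n-m)$.

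To obtain $u\in L^s(D)$ globally, I would apply the volume–capacity inequality \eqref{eq:volume-capacity} directly to $E=\{u<-t\}$ rather than to $K\cap\{u<-t\}$. If the relative compactness of $E$ in $D$ is an issue, I would apply it to $E\cap\{|z|\le R\}$ instead and use the fact that $D$ is bounded, with the bound on $\mCap_m(E,D)$ holding for the whole sublevel set. The layer-cake computation of \Cref{thm:capacity}, applied on $D$, then finishes the proof.

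\textbf{Final assertion.} Take $p=0$. Since $\mathcal E_m$ coincides locally with $\mathcal E_{0,m}$ on smaller hyperconvex subdomains, with a locally bounded difference, and $\mathrm C_m$ is invariant under adding locally bounded functions, $\mathrm C_m$ holds locally for every $u\in\mathcal E_m(D)$.
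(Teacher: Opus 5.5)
Your argument is correct and follows the paper's route. The paper simply cites the {\AA}hag--Czy{\.z} bound \eqref{eq:energy-capacity} for $u\in\mathcal E_{p,m}(D)$. You instead rederive it from \eqref{eq:comparison-capacity} on the approximants $u_j$, using Chebyshev's inequality and a monotone limit.

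Your Step~2 is easier than you feared. For each fixed competitor $\psi$, monotone convergence gives $\int_{\{u_j<-t\}}(\ddc\psi)^m\wedge\beta^{n-m}\uparrow\int_{\{u<-t\}}(\ddc\psi)^m\wedge\beta^{n-m}$, so no outer-capacity argument or shift $t'<t$ is needed.

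One small correction concerns the global $L^s(D)$ claim. Intersecting $\{u<-t\}$ with a ball does not make it relatively compact in the bounded domain $D$. Instead, apply \eqref{eq:volume-capacity} to compact sets $K\subset\{u<-t\}$, use $\mCap_m(K,D)\le\mCap_m(\{u<-t\},D)$, and take the supremum over $K$ by inner regularity of Lebesgue measure.
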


\begin{proof}
Write
\[
 e_{p,m}(u)=\int_D(-u)^p(\ddc u)^m\wedge\beta^{n-m}.
\]
The sublevel estimate of {\AA}hag--Czy{\.z}
\cite[Lemma 5.2]{AhagCzyz2020} gives
\[
 \mCap_m(\{u<-t\},D)
 \le 2^{m+p}e_{p,m}(u)t^{-(m+p)},\qquad t>0.
 \tag{7.4}\label{eq:energy-capacity}
\]
Thus \(u\) satisfies \(\mathrm C_{m,p}\) with \(A=0\).  Combining
\eqref{eq:energy-capacity} with the volume-capacity estimate and the
layer-cake formula, as in the proof of \Cref{thm:capacity}, gives
\eqref{eq:energy-sobolev}.  In the notation of \cite{AhagCzyz2020},
\(u\in\mathcal E_m(D)\) means that, for every \(\omega\Subset D\), the
restriction of \(u\) to \(\omega\) agrees with some
\(u_\omega\in\mathcal E_{0,m}(D)\).  Applying the case \(p=0\) to each
\(u_\omega\) proves the local assertion.
\end{proof}

\begin{remark}
For \(p>0\), the additional decay factor \(t^{-p}\) in
\eqref{eq:energy-capacity} is essential.  Using only the weaker critical
condition \(\mathrm C_m\) would yield \(s<nm/(n-m)\), rather than the
stronger {\AA}hag--Czy{\.z} range in \eqref{eq:energy-sobolev}.
\end{remark}

\begin{remark}
\Cref{thm:capacity} provides a sufficient criterion but does not prove the
unrestricted conjecture.  It would suffice to show that every local
\(m\)-subharmonic function satisfies \(\Cm\); a correspondingly sharp
weak-\(L^{\pc}\) volume-tail estimate would also settle
\eqref{eq:blocki-conjecture}.
\end{remark}

\section{Two problems of Benali--Ghiloufi}
\label{sec:bg-problems}

\subsection{Problem 2: the maximum formula}

For \(u\in\SH_m(\Omega)\), set
\[
 M(u,a,r)=\sup_{B(a,r)}u.
\]
Benali--Ghiloufi proved that \(M(u,a,r)\), viewed as a function of
\(\phi_m(r)\), is increasing and convex.  They also proved that
\[
 \ell_u(a):=2\lim_{r\downarrow0}
 \frac{M(u,a,r)}{\phi_m(r)}
\]
exists and satisfies \(\ell_u(a)\le\nu_u(a)\).  In our notation, their
Problem~2 is the following \cite[Problem~2]{BenaliGhiloufi2018}:

\begin{quote}
Is it true that, for every \(u\in\SH_m(\Omega)\) and every \(a\in\Omega\),
\[
 \ell_u(a)=\nu_u(a),
 \qquad\text{equivalently,}\qquad
 \nu_u(a)=2\lim_{r\downarrow0}
 \frac{M(u,a,r)}{\phi_m(r)}\,?
\]
\end{quote}

\begin{theorem}
\label{thm:bg-problem2}
Let \(1\le m<n\).  For every \(u\in\SH_m(\Omega)\) and \(a\in\Omega\),
\[
 \boxed{\quad
 \nu_u(a)=2\lim_{r\downarrow0}
 \frac{M(u,a,r)}{\phi_m(r)}.
 \quad}
 \tag{8.1}\label{eq:bg-maximum}
\]
\end{theorem}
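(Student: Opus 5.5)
Since Benali--Ghiloufi already proved that \(\ell_u(a)\) exists and that \(\ell_u(a)\le\nu_u(a)\), the plan is to prove only the reverse inequality \(\ell_u(a)\ge\nu_u(a)\). Note first that \(\phi_m(r)<0\) and \(M(u,a,r)\to u(a)\). If \(u(a)>-\infty\), then \(M/\phi_m\to0\), so \(\ell_u(a)=0\le\nu_u(a)\); in this case \(\nu_u(a)=0\) as well, because \(\lambda(u,a,r)\ge u(a)\) forces \(\lambda/\phi_m\to0\). Hence we may assume \(u(a)=-\infty\), translate so that \(a=0\), and assume \(u<0\) near \(0\).

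The reverse inequality is an upper bound on the size of \(M(u,0,r)\): we must show \(M(u,0,r)\le\tfrac{\nu}{2}\phi_m(r)+o(\phi_m(r))\), where \(\nu=\nu_u(0)\). We would use tangents. For \(r\in(0,1)\), define the rescalings
\[
u_r(z)=\frac{u(rz)}{-\phi_m(r)}\cdot\frac{1}{?}
\]
normalized so that \(\phi_m\) is invariant under them. Since \(\phi_m(rs)=r^{-2(n/m-1)}\phi_m(s)\), we set \(u_r(z)=r^{2(n/m-1)}u(rz)\). These functions are \(m\)-subharmonic on expanding balls. The spherical-mean formula \eqref{eq:lelong-spherical}, combined with the monotonicity and convexity of \(\lambda\) in \(\phi_m\), gives uniform bounds \(\lambda(u_r,0,s)\ge C\phi_m(s)\). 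Hence the family \(\{u_r\}\) is relatively compact in \(L^1_{\loc}(\C^n)\), and its limit points are tangents. The strong uniqueness theorem for tangents when \(m<n\) asserts that every tangent equals \(\tfrac{\nu}{2}\phi_m(|z|)\), that is, \(\nu\) times the normalized fundamental solution. Consequently \(u_r\to\tfrac{\nu}{2}\phi_m(|z|)\) in \(L^1_{\loc}\) as \(r\to0\), along the full family and not merely along subsequences.

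Next we pass from \(L^1\) convergence to control of the supremum. The tool is the Hartogs-type lemma for \(m\)-subharmonic functions: if \(v_j\to v\) in \(L^1_{\loc}\) and the \(v_j\) are locally uniformly bounded above, then \(\limsup_j\sup_K v_j\le\sup_K v\) for every compact set \(K\). Apply this with \(K=\overline{B(0,1)}\). The limit satisfies \(\sup_{\overline B(0,1)}\tfrac{\nu}{2}\phi_m=\tfrac{\nu}{2}\phi_m(1)\), because \(\phi_m\) is increasing in \(r\). We obtain
\[
\limsup_{r\to0} r^{2(n/m-1)}M(u,0,r)\le\tfrac{\nu}{2}\phi_m(1).
\]
Dividing by the negative quantity \(\phi_m(1)=r^{2(n/m-1)}\phi_m(r)\) gives \(\liminf 2M(u,0,r)/\phi_m(r)\ge\nu\), which is \(\ell_u(0)\ge\nu_u(0)\). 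The upper-boundedness hypothesis holds because \(u<0\), so \(u_r<0\) on \(B(0,1)\).

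We expect the main obstacle to be the exact form and hypotheses of the strong uniqueness theorem for tangents. Specifically, we must check that the normalization of the tangent constant matches \(\nu_u\) as defined in \eqref{eq:lelong-spherical}, and that the theorem gives uniqueness of the limit and not merely radial symmetry. If it only gives radiality, we would try to recover the constant from the spherical means: \(\lambda(u_r,0,s)\to\lambda(\text{tangent},0,s)\) for a.e.\ \(s\), and \(\lambda(u_r,0,s)=r^{2(n/m-1)}\lambda(u,0,rs)\to\tfrac{\nu}{2}\phi_m(s)\) by \eqref{eq:lelong-spherical}. A radial \(m\)-subharmonic function is determined by its spherical means, so this pins the tangent down as \(\tfrac{\nu}{2}\phi_m\). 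A secondary concern is whether the Hartogs lemma is available in the \(\SH_m\) setting. This should be routine, because \(m\)-subharmonic functions are subharmonic and the classical Hartogs lemma for subharmonic functions suffices.
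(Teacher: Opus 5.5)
Your proposal is correct and follows essentially the same route as the paper: rescale by \(u_r(w)=r^{2q}u(rw)\) with \(q=n/m-1\), use the strong uniqueness of tangents for \(m<n\) to get \(u_r\to\tfrac{\nu_u(0)}{2}\phi_m(|w|)\) in \(L^1_{\loc}\), and apply Hartogs' lemma on \(\overline{B(0,1)}\) to obtain \(\liminf 2M(u,0,r)/\phi_m(r)\ge\nu_u(0)\). The differences are cosmetic. The paper derives the opposite inequality directly from \(\lambda(u,0,r)\le M(u,0,r)\) instead of citing the Benali--Ghiloufi bound \(\ell_u\le\nu_u\), and your separate treatment of the case \(u(a)>-\infty\) is unnecessary because the tangent argument already covers \(\nu_u(a)=0\).
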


\begin{proof}
By translating coordinates, we may assume that \(a=0\), and set
\[
 q=\frac nm-1=\frac{n-m}{m}>0.
\]
Consider the rescaled functions
\[
 u_r(w)=r^{2q}u(rw).
\]
The strong uniqueness theorem for tangents due to Harvey--Lawson, in the
form stated for \(m\)-subharmonic functions by Dinew--Ko{\l}odziej
\cite[Theorem 3.1]{DinewKolodziej2018}, gives
\[
 u_r\longrightarrow
 U(w)=-\frac{\nu_u(0)}{2q}|w|^{-2q}
 \quad\text{in }L^1_{\loc}(\C^n).
 \tag{8.2}\label{eq:tangent-convergence}
\]
The coefficient is determined by the Benali--Ghiloufi spherical-mean formula
\eqref{eq:lelong-spherical}, together with
\(\phi_m(r)=-(qr^{2q})^{-1}\).

Each \(u_r\) is subharmonic.  Hartogs' lemma, applied on a fixed ball
containing \(\overline{B(0,1)}\), therefore yields
\[
 \limsup_{r\downarrow0}r^{2q}M(u,0,r)
 =\limsup_{r\downarrow0}\sup_{B(0,1)}u_r
 \le\sup_{\overline{B(0,1)}}U
 =-\frac{\nu_u(0)}{2q}.
 \tag{8.3}\label{eq:hartogs-upper}
\]
On the other hand, the spherical mean satisfies
\(\lambda(u,0,r)\le M(u,0,r)\), while
\eqref{eq:lelong-spherical} gives
\[
 \lim_{r\downarrow0}r^{2q}\lambda(u,0,r)
 =-\frac{\nu_u(0)}{2q}.
 \tag{8.4}\label{eq:mean-limit}
\]
Together, \eqref{eq:hartogs-upper} and \eqref{eq:mean-limit} show that
\[
 r^{2q}M(u,0,r)\longrightarrow-\frac{\nu_u(0)}{2q}.
\]
Since \(\phi_m(r)=-(qr^{2q})^{-1}\), this is equivalent to
\eqref{eq:bg-maximum}.  The argument also covers the case \(\nu_u(0)=0\).
\end{proof}

\begin{remark}
For \(m=n\), one has \(q=0\), so the power rescaling above degenerates.  The
classical maximum formula for plurisubharmonic functions still holds,
although psh tangents need not be radial.  Thus the preceding proof relies
on a rigidity phenomenon specific to \(m<n\).
\end{remark}

\subsection{Problem 3: semicontinuity and polynomial openness}

For \(u\in L^1_{\loc}(\Omega)\) and \(a\in\Omega\), set
\[
 I_u(a):=\{p>0:|u|^p\text{ is integrable in a neighborhood of }a\}.
\]
In our notation, Benali--Ghiloufi's Problem~3 reads as follows
\cite[Problem~3]{BenaliGhiloufi2018}:

\begin{quote}
Let \(u\in\SH_m(\Omega)\).
\begin{enumerate}[label=\textup{(\arabic*)}]
\item Are the maps
\[
 a\longmapsto\iota_a(u)
 \qquad\text{and}\qquad
 u\longmapsto\iota_a(u)
\]
lower semicontinuous on \(\Omega\) and on the space of locally integrable
functions, respectively?
\item Is \(I_u(a)\) an open subset of \((0,\infty)\)?
\end{enumerate}
\end{quote}

For the function variable in part~\textup{(1)}, we interpret lower
semicontinuity with respect to the \(L^1_{\loc}\)-topology.  For a compact
set \(K\Subset\Omega\), we also write
\[
 I_u(K):=\{p>0:|u|^p\in L^1
 \text{ on a neighborhood of }K\}.
\]

\begin{theorem}
\label{thm:bg-problem3}
Let \(1\le m<n\).
\begin{enumerate}[label=\textup{(\roman*)}]
\item The map \(a\mapsto\iota_a(u)\) is lower semicontinuous on \(\Omega\)
for every locally integrable \(u\).
\item For fixed \(a\), the map \(u\mapsto\iota_a(u)\) is not lower
semicontinuous in the \(L^1_{\loc}\)-topology, even when restricted to
\(\SH_m(\Omega)\).
\item The interval \(I_u(K)\) need not be open, even when
\(u\in\SH_m(\Omega)\) and \(K\) is a singleton.
\end{enumerate}
\end{theorem}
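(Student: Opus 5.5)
Parts (i) and (iii) follow from results already established, and only part (ii) needs a new construction.

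For (i), we invoke \Cref{prop:point-lsc} directly. It applies to every \(u\in L^1_{\loc}(\Omega)\), and in particular to every \(u\in\SH_m(\Omega)\).

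For (iii), we take \(K=\{0\}\) and \(u=\varphi_{\alpha,\gamma}\) from \Cref{prop:closed-endpoint}. Fix any \(0<\alpha<c_m\) and \(\gamma>\alpha/n\), working on a small ball \(\Omega\) around the origin. That proposition gives \(I_u(0)=(0,n/\alpha]\), and this interval is not open in \((0,\infty)\).

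For (ii), the plan is to build a sequence \(u_j\in\SH_m(\Omega)\) with \(u_j\to u\) in \(L^1_{\loc}\), where \(\iota_0(u)\) is large but \(\iota_0(u_j)\) is uniformly smaller. This contradicts \(\liminf_j\iota_0(u_j)\ge\iota_0(u)\). The natural choice uses vanishing multiples of the fundamental pole:
\[
 u\equiv0,\qquad u_j=\tfrac1j\,G_m=-\tfrac1j|z|^{-2c_m}
\]
on a ball \(\Omega=B(0,R)\). By \Cref{thm:radial-classification} (case \(\alpha=c_m\), \(\gamma=0\)), each \(u_j\) is \(m\)-subharmonic across \(0\), and the class is a cone, so scaling by \(1/j\) is harmless. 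Since \(G_m\in L^1_{\loc}\), we have \(\|u_j\|_{L^1(K)}=\tfrac1j\|G_m\|_{L^1(K)}\to0\) for every compact \(K\), so \(u_j\to0\) in \(L^1_{\loc}\). The constant \(0\) is \(m\)-subharmonic and satisfies \(\iota_0(0)=+\infty\). By \Cref{prop:radial-lp}, or the remark following it, \(\iota_0(u_j)=\iota_0(G_m)=\pc\) for every \(j\). Hence \(\liminf_j\iota_0(u_j)=\pc<\infty=\iota_0(u)\), which violates lower semicontinuity at \(u=0\).

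The main obstacle I expect is the value \(+\infty\). If one objects to infinite values in the definition of semicontinuity, we replace the limit \(0\) by a model with finite but larger exponent. Take \(u=u_{\alpha,0}\) with \(0<\alpha<c_m\), so that \(\iota_0(u)=n/\alpha>\pc\) by \Cref{prop:radial-lp}. Then set \(u_j=u+\tfrac1jG_m\). As a sum of two \(m\)-subharmonic functions it is \(m\)-subharmonic, and it converges to \(u\) in \(L^1_{\loc}\). Both summands are negative and radial, so \(|u_j|\ge\tfrac1j|G_m|\), which gives \(\iota_0(u_j)\le\pc\). This gives the strict drop \(\liminf_j\iota_0(u_j)\le\pc<n/\alpha=\iota_0(u)\), and the conclusion holds for finite exponents as well.
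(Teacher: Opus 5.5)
Your proposal is correct and follows the paper's proof. Parts (i) and (iii) are cited exactly as the paper does. For (ii), the paper also uses vanishing multiples \(v/j\to0\) of a radial pole, but with a subcritical pole \(v=-|z|^{-2\alpha}\), \(0<\alpha<c_m\), so that \(\iota_0(v/j)=n/\alpha\); you use the critical pole \(G_m\), so that \(\iota_0(G_m/j)=\pc\).

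Your extra variant \(u_j=-|z|^{-2\alpha}+\tfrac1j G_m\to-|z|^{-2\alpha}\) is also valid and slightly strengthens the paper's statement. It shows the drop \(\iota_0(u_j)\le\pc<n/\alpha=\iota_0(u)\) occurs even at a limit with finite exponent, not only at the zero function, where \(\iota_0=+\infty\).
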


\begin{proof}
Part~\textup{(i)} follows from \Cref{prop:point-lsc}.  For
part~\textup{(ii)}, choose
\[
 0<\alpha<c_m,\qquad v(z)=-|z|^{-2\alpha}
\]
on a small ball centered at zero.  By
\Cref{thm:radial-classification}, \(v\in\SH_m\), and
\Cref{prop:radial-lp} gives
\[
 \iota_0(v)=\frac n\alpha.
\]
Set \(v_j=v/j\).  Because \(\alpha<c_m\le n-1<n\), one has
\(v\in L^1_{\loc}\), and hence
\[
 v_j\longrightarrow0\quad\text{in }L^1_{\loc}.
\]
Multiplication by a positive constant does not affect polynomial
integrability, so
\[
 \iota_0(v_j)=\frac n\alpha
 \quad\text{for every }j,
 \qquad
 \iota_0(0)=+\infty.
\]
This violates lower semicontinuity at the zero function.

For part~\textup{(iii)}, take the function
\(\varphi_{\alpha,\gamma}\) in
\Cref{prop:closed-endpoint}, with
\(0<\alpha<c_m\) and \(\gamma>\alpha/n\).  Then
\[
 I_{\varphi_{\alpha,\gamma}}(\{0\})
 =\left(0,\frac n\alpha\right],
\]
which is not open.
\end{proof}

\begin{remark}
If a plurisubharmonic germ \(u\) at \(a\) is not identically \(-\infty\),
Skoda's exponential integrability theorem implies that
\(|u|^p\in L^1_{\loc}\) for every finite \(p\).  Thus
\(\iota_a(u)=+\infty\), and \(I_u(\{a\})=(0,\infty)\).  The failures in
\Cref{thm:bg-problem3}(ii)--(iii) are specific to \(m<n\) and should not be
confused with strong openness for exponential weights.
\end{remark}

\section{Radial functions and the Benali--Ghiloufi theorem}
\label{sec:radial-bg}

The mean-value theorem of Benali--Ghiloufi provides a direct way to verify
\(\Cm\) for radial functions.  This application is distinct from its role
in obtaining the upper bound in \eqref{eq:bg-exponent-bounds}.  Although the
maximum identity in \Cref{thm:bg-problem2} holds without radiality, it does
not by itself provide a pointwise lower bound.  For radial functions, the
mean asymptotic becomes pointwise.

\begin{theorem}
\label{thm:radial-capacity}
Let \(u\in\SH_m(B(a,R))\) be radial about \(a\), and suppose that
\(u\not\equiv-\infty\).  Then \(u\) satisfies \(\Cm\) locally.  Hence
\[
 u\in L^p_{\loc}(B(a,R))\quad\text{for all }p<\pc.
\]
More generally, the same local conclusion near \(a\) holds without the
radiality assumption whenever
\[
 u(z)\ge A\phi_m(|z-a|)-B\qquad(z\to a)
 \tag{9.1}\label{eq:radial-lower-bound}
\]
for some \(A,B>0\).
\end{theorem}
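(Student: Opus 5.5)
The plan is to reduce the radial case to the lower bound \eqref{eq:radial-lower-bound} and then prove the general statement by comparison with the fundamental solution. Assume \(a=0\).

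\textbf{Step 1: radial functions satisfy \eqref{eq:radial-lower-bound}.} A radial \(u\) is constant on each sphere \(|z|=r\). Hence the spherical mean \(\lambda(u,0,r)\) equals \(u\) there, and by the maximum principle for subharmonic functions \(M(u,0,r)=\lambda(u,0,r)\). Benali--Ghiloufi show that \(M(u,0,r)\) is an increasing convex function of \(\phi_m(r)\) (recalled before \Cref{thm:bg-problem2}). The limit \eqref{eq:lelong-spherical} is finite, because \(u\not\equiv-\infty\) implies \(\nu_u(0)<\infty\). By convexity, the slope of \(u(r)\) against \(\phi_m(r)\) decreases to \(\nu_u(0)/2\) as \(\phi_m\to-\infty\). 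So for small \(r\),
\[
u(z)\ge A\phi_m(|z|)-B, \qquad A=\tfrac{\nu_u(0)}{2}+1,
\]
with \(B\) chosen to absorb the value at a fixed radius \(r_0\). Away from \(0\), a radial \(u\not\equiv-\infty\) is locally bounded: it is finite on the spheres, since its mean there is finite, and subharmonic radial profiles are continuous in \(r\) for \(r>0\). So \(\Cm\) is only in question near \(0\).

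\textbf{Step 2: the lower bound gives \(\Cm\).} Suppose \eqref{eq:radial-lower-bound} holds on \(B(0,r_0)\). Then
\[
\{u<-B-t\}\cap B(0,r_0)\subset\{A\phi_m(|z|)<-t\}=B(0,\rho_t), \qquad \rho_t\asymp (A/t)^{m/(2(n-m))}.
\]
For small balls inside a fixed ball \(D\), we have \(\mCap_m(B(0,\rho),D)\le C\rho^{2(n-m)}\). This is the scaling recorded after \eqref{eq:comparison-capacity}. To prove it directly, use the comparison estimate \eqref{eq:comparison-capacity} with \(v=\max\{\phi_m(|z|)/\phi_m(\rho),-2\}\) suitably normalized, or the explicit relative extremal function of a ball, which is an affine function of \(G_m\). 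Thus
\[
\mCap_m\bigl(K\cap\{u<-B-t\},D\bigr)\le C t^{-m},
\]
which is \(\Cm\) near \(0\). On compact sets away from \(0\) where \(u\) is bounded below, the sublevel sets are empty for large \(t\). A general \(K\) is handled by splitting it into these two pieces, using subadditivity of capacity and taking the larger of the constants \(A\). \Cref{thm:capacity} with \(\delta=0\) then gives \(u\in L^p_{\loc}\) for every \(p<\pc\).

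\textbf{Main obstacle.} The delicate part is Step 1: making rigorous the pointwise lower bound from the convexity of the Benali--Ghiloufi mean. Two points need care: that the slope bound persists all the way down to \(r=0\), and that the identity \(M=\lambda=u\) holds on spheres for usc radial functions. I would first try the convexity argument directly, using that a convex increasing function of \(x=\phi_m(r)\in(-\infty,x_0]\) satisfies \(f(x)\ge f(x_0)+f'_-(x_0)(x-x_0)\) for \(x\le x_0\). This already gives a linear lower bound with \(A=f'_-(x_0)\), so the finiteness of the Lelong number is not even needed. The capacity bound for balls is standard but should be checked against the normalization of \(\mCap_m\).
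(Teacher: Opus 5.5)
Your proposal is correct and follows essentially the same route as the paper: Benali--Ghiloufi convexity of the radial profile as a function of \(\phi_m(r)\) gives \(u\ge A\phi_m(|z-a|)-B\) near \(a\); the deep sublevel sets then lie in balls of radius \(\asymp t^{-m/(2(n-m))}\); and the explicit relative extremal function of a ball gives \(\mCap_m(B(a,\rho),D)\lesssim\rho^{2(n-m)}\), hence the decay \(t^{-m}\) and the conclusion via \Cref{thm:capacity}. Your supporting-line observation (taking \(A=F'_-(x_0)\) at an interior point \(x_0\)) is a valid small simplification, because it avoids identifying the recession slope with \(\nu_u(a)/2\) through \eqref{eq:lelong-spherical} or \Cref{thm:bg-problem2}; note also that your side alternative via \eqref{eq:comparison-capacity} needs \(-\phi_m(|z|)/\phi_m(\rho)\), since \(\phi_m(|z|)/\phi_m(\rho)>0\) is not \(m\)-subharmonic.
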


\begin{proof}
Set \(q=(n-m)/m\), so that
\(\phi_m(r)=-(qr^{2q})^{-1}\).  After shrinking the ball and adding a
constant, we may write \(u(z)=f(|z-a|)\le0\).  Since \(u\) is subharmonic,
\(f\) is increasing, and hence
\[
 M(u,a,r)=f(r).
\]
Thus \Cref{thm:bg-problem2} gives
\[
 \nu_u(a)=2\lim_{r\downarrow0}\frac{f(r)}{\phi_m(r)}.
 \tag{9.2}\label{eq:radial-lelong}
\]
For radial \(u\), this formula also follows directly from the
Benali--Ghiloufi spherical-mean identity \eqref{eq:lelong-spherical}, since
\(\lambda(u,a,r)=f(r)\).  The point of \Cref{thm:bg-problem2} is that no
radiality assumption is required.

To record the convexity underlying the next estimate, define
\[
 F(s)=f\bigl(\phi_m^{-1}(s)\bigr),\qquad s=\phi_m(r).
\]
Since \(M(u,a,r)=f(r)\), the Benali--Ghiloufi convexity theorem directly
implies that \(F\) is increasing and convex.  In the smooth case, the radial
Hessian test gives an alternative verification.  Write \(u(z)=g(t)\), where
\(t=|z-a|^2\), and set
\[
 A_0=g'(t),\qquad B_0=g'(t)+tg''(t).
\]
Because \(s=-t^{-q}/q\), one has \(ds/dt=t^{-q-1}\), and therefore
\[
 F'(s)=t^{q+1}A_0,\qquad
 F''(s)=t^{2q+1}(B_0+qA_0)\ge0.
\]
When \(A_0>0\), the final inequality is equivalent to
\(B_0/A_0\ge-q\) by \Cref{lem:radial-test}.  If \(A_0=0\), it follows from
\(\sigma_1=B_0\ge0\).  In the nonsmooth case, the conclusion is precisely
the Benali--Ghiloufi convexity theorem.  As \(r\downarrow0\), one has
\(s\to-\infty\).  Since \(F\) is convex and increasing, its recession slope
at \(-\infty\) is finite and bounded above by every finite interior chord
slope.  By \eqref{eq:radial-lelong}, this slope equals \(\nu_u(a)/2\).
Consequently, for some \(A>0\),
\[
 \frac{f(r)}{\phi_m(r)}\le A
\]
for small \(r\).  Since \(\phi_m<0\),
\[
 f(r)\ge A\phi_m(r),
\]
which gives \eqref{eq:radial-lower-bound} up to an additive constant.

Choose \(r_0>0\) so that \(u(z)\ge A\phi_m(|z-a|)\) on
\(B(a,r_0)\).  If \(z\) belongs to the sublevel set
\(\{u<-t\}\cap B(a,r_0)\), then
\[
 -\frac{A}{q|z-a|^{2q}}
 =A\phi_m(|z-a|)
 \le u(z)<-t.
\]
Multiplication by \(-1\) and rearrangement give
\[
 |z-a|<
 \left(\frac{A}{qt}\right)^{1/(2q)}.
\]
Hence, for all sufficiently large \(t\),
\[
 \{u<-t\}\cap B(a,r_0)
 \subset
 B\left(a,\left(\frac{A}{qt}\right)^{1/(2q)}\right).
\]
If the constant \(B\) in \eqref{eq:radial-lower-bound} is retained, the
same calculation replaces \(t\) by \(t-B\).  For \(t>2B\), this only changes
the constant.  After increasing the lower threshold for \(t\), the resulting
ball is contained in \(B(a,r_0)\).

Choose \(R_0\) such that \(r_0<R_0<R\).  The relative extremal function of
\(\overline{B(a,\rho)}\) in \(B(a,R_0)\) is
\[
 h_\rho(z)=
 \max\left\{-1,
 \frac{\phi_m(|z-a|)-\phi_m(R_0)}
      {\phi_m(R_0)-\phi_m(\rho)}\right\}.
\]
Computing its Hessian mass gives
\[
 \mCap_m(B(a,\rho),B(a,R_0))
 =c_{n,m}
   \bigl(\rho^{-2q}-R_0^{-2q}\bigr)^{-m}
 \le C_{n,m,R_0}\rho^{2qm}
 =C_{n,m,R_0}\rho^{2(n-m)},
\]
where \(c_{n,m}>0\) depends on the normalization of \(\ddc\).  This holds for
small \(\rho\).  With
\(\rho=(A/(qt))^{1/(2q)}\), monotonicity of relative capacity now yields
\[
 \mCap_m(\{u<-t\}\cap B(a,r_0),B(a,R_0))
 \le C\rho^{2(n-m)}
 =C't^{-(n-m)/q}
 =C't^{-m}.
\]
This verifies \(\Cm\) near \(a\).  On compact subsets away from the center,
the radial profile is bounded below, so sufficiently deep sublevel sets are
empty.  The same sublevel-set and capacity argument, beginning directly
with \eqref{eq:radial-lower-bound}, proves the nonradial assertion.
\end{proof}

The following corollary combines this theorem with
\eqref{eq:bg-exponent-bounds}.

\begin{corollary}
\label{cor:capacity-lelong}
If \(u\) satisfies \(\Cm\) in a neighborhood of \(x\), then
\(\iota_x(u)\ge\pc\).  If in addition \(\nu_u(x)>0\), then
\[
 \iota_x(u)=\pc.
\]
In particular, this equality holds for a radial \(m\)-subharmonic germ
with positive \(m\)-Lelong number.
\end{corollary}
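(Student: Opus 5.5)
The plan is to combine \Cref{thm:capacity} with \(\delta=0\) and the Benali--Ghiloufi upper bound in \eqref{eq:bg-exponent-bounds}, and then to read off the radial case from \Cref{thm:radial-capacity}.

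\emph{Lower bound.} Suppose \(u\) satisfies \(\Cm\) on a neighborhood \(W\) of \(x\). Fix a compact neighborhood \(K\Subset W\) of \(x\), and choose \(D\), \(A\), \(C\), \(T\) as in the definition of \(\Cmd\) with \(\delta=0\). The proof of \Cref{thm:capacity} then gives \(|u|^s\in L^1(K)\) for every \(s<mn/(n-m)=\pc\). The interior of \(K\) is an open neighborhood of \(x\). Hence, by the definition of the pointwise exponent \(\iota_x\) (the case of \(\iota_K\) with \(K=\{x\}\)), we get \(\iota_x(u)\ge s\) for every such \(s\). Letting \(s\uparrow\pc\) yields \(\iota_x(u)\ge\pc\).

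One point needs care: \Cref{thm:capacity} is stated for \(u\in\SH_m(\Omega)\) satisfying \(\Cmd\) on all of \(\Omega\). Its proof, however, is local in \(K\), so it applies verbatim with \(\Omega\) replaced by \(W\). I expect this localization to be the only mild subtlety; no new estimate is needed.

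\emph{Upper bound and equality.} If in addition \(\nu_u(x)>0\), the second inequality in \eqref{eq:bg-exponent-bounds} gives \(\iota_x(u)\le\pc\). Combined with the lower bound, this yields \(\iota_x(u)=\pc\).

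\emph{Radial case.} For a radial germ, \Cref{thm:radial-capacity} shows that \(u\) satisfies \(\Cm\) on a neighborhood of its center, so the previous two steps apply whenever the \(m\)-Lelong number at the center is positive. The real content of the corollary lies in the cited results, so I foresee no genuine obstacle. The only check is that the normalization of \(\nu_u\) used in \eqref{eq:bg-exponent-bounds} matches \eqref{eq:lelong-spherical}, which it does since both are taken from \cite{BenaliGhiloufi2018}.
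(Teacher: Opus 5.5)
Your proposal is correct and follows the paper's route: the lower bound comes from \Cref{thm:capacity} with \(\delta=0\), applied on the neighborhood where \(\Cm\) holds; equality then follows from the Benali--Ghiloufi upper bound in \eqref{eq:bg-exponent-bounds}; and the radial case comes from \Cref{thm:radial-capacity}. The paper only states that the corollary combines these results, so your localization remark is a harmless clarification rather than a new step.
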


When \(m=1\), the classical local weak-capacity estimate shows that ordinary
subharmonic functions satisfy \(\Cm\) locally, and the corollary recovers
\(\iota_x(u)=n/(n-1)\) at points with positive Lelong number.  For
\(1<m<n\), the positive-Lelong result of Benali--Ghiloufi provides only the
upper bound.  The matching lower bound follows once \(\Cm\) has been
verified, for example by the radial theorem.

\section{Future directions}
\label{sec:future}

\paragraph{The weak critical endpoint.}
A natural intermediate step toward B{\l}ocki's conjecture is to determine
whether every local \(m\)-subharmonic function belongs to
\(L^{\pc,\infty}_{\loc}\).  Equivalently, for every \(K\Subset\Omega\), one
seeks a constant \(C_K>0\) such that
\[
 \Vol(K\cap\{|u|>t\})\le C_K t^{-\pc},\qquad t>0.
\]
The exponent is sharp, as shown by the fundamental solution.  Such an
estimate would imply \(u\in L^p_{\loc}\) for every \(p<\pc\) by the
layer-cake formula.  Although \(\Cm\) yields this strict range, it does not
by itself establish the weak endpoint.

\paragraph{Intrinsic capacity criteria.}
It remains to characterize natural classes of \(m\)-subharmonic germs
satisfying \(\Cm\), and to determine whether the capacity scale
\(\mathrm C_{m,\delta}\) is also necessary for the corresponding
integrability ranges.  Borderline Lorentz and logarithmic refinements are
especially relevant because the power-logarithmic models show that the
leading power alone does not determine endpoint membership.

\paragraph{A replacement for Skoda theory.}
The counterexample in \Cref{thm:skoda-failure} shows that exponential
integrability cannot be controlled by the \(m\)-Lelong number alone.  One
should instead seek thresholds based on Hessian capacity, energy, or the
full tangent profile.  The capacity method applied to the finite-energy
range problem on compact manifolds in \cite{Lin2025}, together with the
local scale \(\mathrm C_{m,\delta}\) developed here, suggests compatible
starting points for such a theory.  Another question is whether stronger
energy topologies restore the semicontinuity of
\(u\mapsto\iota_a(u)\), which fails in the bare \(L^1_{\loc}\)-topology.

\textbf{Acknowledgments} The author thanks BIMSA for its hospitality.
ChatGPT (OpenAI) was used for language editing and exploratory discussion
during preparation of the manuscript. The author is solely responsible for
the mathematical content.

\begin{thebibliography}{99}
\raggedright

\bibitem{AhagCzyz2020}
P.~{\AA}hag and R.~Czy{\.z},
\emph{Poincar\'e- and Sobolev-type inequalities for complex
\(m\)-Hessian equations}, Results Math. \textbf{75} (2020), Article 63.

\bibitem{BenaliGhiloufi2018}
A.~Benali and N.~Ghiloufi,
\emph{Lelong numbers of \(m\)-subharmonic functions},
J. Math. Anal. Appl. \textbf{466} (2018), 1373--1392.

\bibitem{Berndtsson2013}
B.~Berndtsson,
\emph{The openness conjecture for plurisubharmonic functions},
arXiv:1305.5781, 2013.

\bibitem{Blocki2005}
Z.~B{\l}ocki,
\emph{Weak solutions to the complex Hessian equation},
Ann. Inst. Fourier (Grenoble) \textbf{55} (2005), 1735--1756.

\bibitem{DinewKolodziej2014}
S.~Dinew and S.~Ko{\l}odziej,
\emph{A priori estimates for complex Hessian equations},
Anal. PDE \textbf{7} (2014), 227--244.

\bibitem{DinewKolodziej2018}
S.~Dinew and S.~Ko{\l}odziej,
\emph{Non standard properties of \(m\)-subharmonic functions},
Dolomites Res. Notes Approx. \textbf{11} (2018), 35--50.

\bibitem{Fang2026}
Y.~Fang,
\emph{Integrability of \((\omega,m)\)-subharmonic functions on compact
Hermitian manifolds}, Ann. Polon. Math. (2026), online first,
doi:10.4064/ap250407-17-9.

\bibitem{GuanZhou2015}
Q.~Guan and X.~Zhou,
\emph{A proof of Demailly's strong openness conjecture},
Ann. of Math. (2) \textbf{182} (2015), 605--616.

\bibitem{HarveyLawson2018}
F.~R.~Harvey and H.~B.~Lawson, Jr.,
\emph{Tangents to subsolutions: existence and uniqueness, Part I},
Ann. Fac. Sci. Toulouse Math. \textbf{27} (2018), 777--848.

\bibitem{Lin2025}
G.~Lin,
\emph{On the finite energy range of \(m\)-Hessian operator},
Potential Anal. \textbf{63} (2025), 1335--1346.

\bibitem{LuNguyen2015}
C.~H.~Lu and V.-D.~Nguyen,
\emph{Degenerate complex Hessian equations on compact K{\"a}hler
manifolds}, Indiana Univ. Math. J. \textbf{64} (2015), 1721--1745.

\end{thebibliography}
\end{document}